\newtheorem{theo}[subsection]{Theorem}
\newtheorem{lem}[subsection]{Lemma}
\newtheorem{prop}[subsection]{Proposition}
\newtheorem{corr}[subsection]{Corollary}
\newtheorem{rem}[subsection]{Remark}
\newcommand{\Q}{{\bf Q}}
\newcommand{\cB}{{\cal B}}
\newcommand{\cA}{{\cal A}}
\newcommand{\cC}{{\cal C}}
\newcommand{\cG}{{\cal G}}
\newcommand{\h}{{\mathfrak h}}
\newcommand{\g}{{\mathfrak g}}
\newcommand{\gl}{{\mathfrak l}}
\newcommand{\Z}{{\bf Z}}
\newcommand{\C}{{\bf C}}
\newcommand{\Rep}{{\cal R}{\it ep}}
\newcommand{\Vect}{{\cal V}{\it ect}}
\newcommand{\da}{\mbox{-}}
\newcommand{\se}{\mbox{:}}
\newcommand{\cro}{{\scriptscriptstyle {\times}}}
\newcommand{\ort}{{\scriptscriptstyle {\perp}}}
\newcommand{\di}{{\scriptscriptstyle {\#}}}
\date{}
\begin{document}
\title{Commutative algebras in Drinfeld categories of abelian Lie algebras.}
 
\author{Alexei Davydov and Vyacheslav Futorny\\ \\Max Planck Institut f\"ur Mathematik, \\
Vivatsgasse 7,
53111 Bonn, Germany\\ 
\\ Institute of Mathematics and Statistics, University of S\~ao Paulo,\\  CEP 05315-970, S\~ao Paulo, Brazil}
\maketitle

\begin{abstract}
We describe (braided-)commutative algebras with non-degenerate multiplicative form in certain braided monoidal categories, corresponding to abelian metric Lie algebras (so-called Drinfeld categories).
We also describe local modules over these algebras and classify commutative algebras with finite number of simple local modules. 
\end{abstract}

\tableofcontents
\section{Introduction}

Motivated by applications for representation theory of vertex operator algebras in this paper we systematically study commutative algebras and their local modules in Drinfeld categories of abelian metric Lie algebras.

In \cite{dr} Drinfeld associated to a non-degenerate invariant bilinear form ({\em metric}) on a Lie algebra an infinitesimal deformation of the canonical tensor structure on its representation category. This infinitesimal deformation is no longer a symmetric monoidal category. The deformed commutativity constraint is only a braiding. It was also explained in \cite{dr} how these ({\em Drinfeld}) categories are related to representation categories of the quantisations of Lie algebras. 

It is well-known that a metric on a Lie algebra gives rise to an {\em affinisation} of (a central extension of the Lie algebra of Laurent polynomials with coefficients in) the original Lie algebra. The category of representations of the affinisation has a tensor structure given by the so-called {\em fusion product}. It was explained in \cite{kl} how the induction functor links the Drinfeld category of a simple Lie algebra (with the Cartan-Kiling metric) with the representation category of its affinisation. In this case the infinitesimal deformation form \cite{dr} becomes a global one (on a certain subcategory of the Drinfeld category). 
Another class of metric Lie algebras for which the infinitesimal deformation form \cite{dr} becomes global is the class of abelian metric Lie algebras. Corresponding Drinfeld categories are related to categories of modules over Heisenberg vertex operator algebras \cite{ll}.

It is known (see \cite{ko}) that commutative algebras in the representation category of a vertex operator algebra correspond to extensions of this vertex operator algebra. Moreover the category of representations of an extended algebra coincides with the category of so-called {\em local} modules over the corresponding commutative algebra.

Here we study commutative algebras and their local modules in Drinfeld categories of abelian Lie algebras. After recalling basic facts about commutative algebras in braided monoidal categories (section \ref{combrloc}) and Drinfeld categories (section \ref{drcat}) we classify  commutative algebras which posses a non-degenerate bilinear form, compatible with the multiplication  and have trivial invariants (section \ref{comdr}). We prove (theorem \ref{coa}) that such algebras correspond to subgroups in the abelian metric Lie algebra, such that the restriction of the metric is integer valued and even.
Then we turn to local modules over commutative algebras (section \ref{locmod}). We show that the category of local modules has a grading, compatible with the tensor product, (proposition \ref{grad}) and characterise the trivial component of this grading (proposition \ref{dz}). We also construct invertible modules sitting in every non-trivial graded component (proposition \ref{fun}). All this together with some technical tools developed in the appendix allows us to classify commutative algebras with finite number of simple local modules (section \ref{finloc}). 

If not stated otherwise all linear algebra constructions will be assumed linear over the ground field $k$ which is algebraically closed  of characteristic zero. 

\section*{Acknowledgment}
The work on the paper began during A.D.'s visit to S\~ao Paulo, where the author was accommodated by the Institute of Mathematics and Statistics of the University of S\~ao Paulo. The paper was finished while A.D. was visiting Max Planck Institut f\"ur Mathematik (Bonn). The first author would like to thank these institutions for hospitality and excellent working conditions. The first author would like to thank FAPESP (grant no. 2008/10526-1) and Max Planck Gesellschaft, which financial support made the visit to S\~ao Paulo and Bonn possible. The second author was  supported in part by the CNPq grant (301743/2007-0) and by the Fapesp grant (2005/60337-2).

\section{Commutative algebras in braided categories and their local modules}\label{combrloc}

This is a preliminary section, where we recall basic facts about commutative algebras in braided monoidal categories and their modules. 

An (associative, unital) {\em algebra} in a monoidal category $\cC$ is a triple $(A,\mu,\iota)$ consisting of an
object $A\in\cC$ together with a {\em multiplication} $\mu:A\otimes A\to A$ and a {\em unit} map $\iota:1\to A$,
satisfying {\em associativiy} $$\mu(\mu\otimes I) = \mu(I\otimes\mu),$$ and {\em unit} $$\mu(\iota\otimes I) = I =
\mu(I\otimes\iota)$$ axioms. Where it will not cause confusion we will be talking about an algebra $A$, suppressing its
multiplication and unit maps.

A left {\em module} over an algebra $A$ is a pair $(M,\nu)$, where $M$ is an object of $\cC$ and $\nu:A\otimes M\to M$
is a morphism ({\em action map}), such that $$\nu(\mu\otimes 1) = \nu(1\otimes\nu).$$ A {\em homomorphism} of left
$A$-modules $M\to N$ is a morphism $f:M\to N$ in $\cC$ such that $$\nu_N(1\otimes f) = f\nu_M.$$
Left modules over an algebra $A\in\cC$ together with module homomorphisms form a category ${_A}{\cC}$. 

Now let $\cC$ be a braided monoidal category with the braiding $c_{X,Y}:X\otimes Y\to Y\otimes X$ (see \cite{js} for definition). An algebra $A$ in $\cC$ is {\em commutative} if $\mu c_{A,A} = \mu$. 

It was shown in \cite{pa} that the category ${_A}{\cC}$ of left modules over a commutative algebra $A$ is monoidal with respect to the tensor product $M\otimes_AN$ over $A$, which can be defines as a coequaliser
$$\xymatrix{M\otimes_AN & M\otimes N \ar[l] && M\otimes A\otimes N \ar@/^5pt/[ll]^{(\nu_M1)(c_{M,A}1)} \ar@/_5pt/[ll]_{1\nu_N} }$$

A (right) module $(M,\nu)$ over a commutative algebra $A$ is {\em local} if and only if the diagram
$$\xymatrix{M\otimes A  \ar[r]^\nu \ar[d]_{c_{M,A}} & M\\ A\otimes M \ar[r]^{c_{A,M}} & M\otimes A \ar[u]_\nu}$$
commutes. Denote by $\cC_A^{loc}$ the full subcategory of $\cC_A$ consisting of local modules. The following result was
established in \cite{pa} (see also \cite{ko}).
\begin{prop}
The category $\cC_A^{loc}$ is a full monoidal subcategory of ${_A}{\cC}$. Moreover, the braiding in $\cC$ induces a braiding in
${_A}{\cC}^{loc}$.
\end{prop}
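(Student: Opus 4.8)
The plan is to establish the two assertions separately: first that $\cC_A^{loc}$ is closed under the tensor product $\otimes_A$ and contains the unit object $A$, so that (being full by definition) it is a full monoidal subcategory of ${_A}\cC$; and then that the braiding $c$ of $\cC$ descends to $\otimes_A$ on local modules. Throughout I read the locality condition as the equality $\nu = \nu\, c_{A,M}\, c_{M,A}$ of maps $M\otimes A\to M$, i.e. as the statement that the action is unchanged by the full monodromy (double braiding) of $M$ with $A$. For the unit, $A$ itself is local: by commutativity $\mu\, c_{A,A}=\mu$, hence $\mu\, c_{A,A}\, c_{A,A}=\mu\, c_{A,A}=\mu$, which is exactly the locality condition for $(A,\mu)$.

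For closure under tensor product, let $M$ and $N$ be local and write $\pi:M\otimes N\to M\otimes_A N$ for the coequaliser. The $A$-action on $M\otimes_A N$ is induced from the action on a tensor factor; I will compute the monodromy of $M\otimes_A N$ with $A$ and show it acts trivially. The key ingredient is the standard consequence of the hexagon axioms that the monodromy decomposes over a tensor product: writing $c_{A,X\otimes Y}=(1\otimes c_{A,Y})(c_{A,X}\otimes 1)$ and $c_{X\otimes Y,A}=(c_{X,A}\otimes 1)(1\otimes c_{Y,A})$, the double braiding $c_{M\otimes N,A}\,c_{A,M\otimes N}$ rewrites, after cancelling adjacent braidings via naturality, as the composite of the monodromies of $M$ and of $N$ with $A$ acting on the respective factors. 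Precomposing with the induced action and using $\nu_M\,c_{A,M}c_{M,A}=\nu_M$ together with the corresponding identity for $N$ then collapses both monodromies, so that $M\otimes_A N$ is local. The only care needed is to commute these identities past $\pi$, which is legitimate because $\pi$ is epi and the two parallel maps defining it are respected by the braidings through naturality.

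For the induced braiding, I would define $\tilde c_{M,N}:M\otimes_A N\to N\otimes_A M$ as the unique morphism with $\tilde c_{M,N}\,\pi_{M,N}=\pi_{N,M}\,c_{M,N}$. The main point — and the main obstacle — is well-definedness: one must check that $\pi_{N,M}\,c_{M,N}$ coequalises the pair $(1\otimes\nu_N,\ (\nu_M\otimes 1)(c_{M,A}\otimes 1))$ defining $M\otimes_A N$. Chasing the two relations through the hexagon identities produces exactly one factor of the monodromy of $N$ (or of $M$) with $A$, and it is precisely the locality of that module which trivialises this factor and makes the two images agree in $N\otimes_A M$; this is the step where locality is indispensable, and where a putative braiding on all of ${_A}\cC$ would fail to exist. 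Once $\tilde c$ is well defined, verifying that it is a morphism of $A$-modules, that $\tilde c^{-1}$ is induced by $c^{-1}_{N,M}$ (again using locality, so that the two composites reduce to identities), and that $\tilde c$ inherits naturality and both hexagon axioms from $c$, are routine diagram chases transported along the epimorphisms $\pi$. This yields the braiding on $\cC_A^{loc}$ and completes the proof.
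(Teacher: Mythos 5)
Your argument is correct, and it is essentially the proof the paper is pointing at: the paper does not prove this proposition itself but cites Pareigis \cite{pa} (see also \cite{ko}), and your reconstruction --- $A$ local as the unit by commutativity, closure under $\otimes_A$ by factoring the monodromy with $A$ across the two tensor factors and transferring the action between factors via the coequaliser relation before invoking locality of $M$ and $N$, and well-definedness of the descended braiding identified as the one step where locality is indispensable --- is exactly the standard argument from those references. No gaps.
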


We call a commutative algebra $A$ in a balanced category $\cC$ {\em balanced} if $\theta_A=1_A$, where $\theta$ is the balancing twist in $\cC$. It was proved in \cite{ko} (see also \cite{ffrs}) that the balancing twist $\theta_M$ of a local module $M$ over a balanced algebra $A$ is a homomorphism of $A$-modules, i.e. the category $\cC_A^{loc}$ of local modules over a balanced algebra is naturally balanced.

\section{Drinfeld categories}\label{drcat}

This is also a preliminary section where we recall the construction of our categories of interest, the so-called Drinfeld categories, associated to metric (and, more generally, Casimir) Lie algebras. 

A Lie algebra $\g$ is called {\em Casimir} if it is equipped with a $\g$-invariant symmetric bi-tensor $\Omega\in\g^{\otimes 2}$: $$[x\otimes 1+1\otimes x,\Omega]=0.$$

A finite dimensional Lie algebra $\g$ is {\em metric} if it is equipped with a non-degenerate symmetric bilinear form $(\da,\da)\se\g\otimes\g\to k$, which is $\g$-invariant $$([x,y],z) + (y,[x,z])) = 0,\quad x,y,z\in\g.$$ We will denote by $\Omega\in\g\otimes\g$ its {\em Casimir} element, i.e. unique element with the property 
$$\sum_i \Omega^{1}_i (\Omega^2_i,x) = x = \sum_i (x,\Omega^{1}_i)\Omega^2_i,$$ where $\sum_i \Omega^{1}_i\otimes\Omega^2_i = \Omega$. In particular $\Omega$ can be written as $\sum_i u_i\otimes u_i$ where $u_i$ is an orthonormal bases of $\g$, i.e. $(u_i,u_j) = \delta_{i,j}$. 
Note that a metric Lie algebra is a Casimir Lie algebra.

For example, a simple Lie algebra has a unique up to a scalar metric structure, given by the Cartan-Killing form (the trace form in the adjoint representation). Another example comes from an abelian Lie algebra with a non-degenerate symmetric bilinear from (which in this case is automatically invariant). It can be shown that a general metric algebra is an orthogonal sum of a semisimple Lie algebra and a solvable Lie algebra, and that a solvable metric Lie algebra is an iterated double extension of a one dimensional metric Lie algebra (see \cite{ml} for details). 

It was shown in \cite{dr} that over the formal power series $k[[h]]$ the category of representations $\Rep(\g)$ of a Casimir Lie algebra $\g$ can be equipped with a structure of braided monoidal category, where the tensor product is the original tensor product of representations ($\g$-modules) and the braiding given by
\begin{equation}\label{comd}
c_{M,N}(m\otimes n) = e^{\pi i h\Omega}(n\otimes m),\quad m\in M, n\in N,\quad M,N\in\Rep(\g).
\end{equation}
It turns out, that for the coherence axioms for braiding to work, one needs to deform the associativity constraint as well:
\begin{equation}\label{asd}
\alpha_{L,M,N}(l\otimes(m\otimes n)) = \Phi((l\otimes m)\otimes n),\quad l\in L, m\in M, n\in N,\quad L, M,N\in\Rep(\g).
\end{equation}
A solution for $\Phi$ (the so-called {\em Drinfeld associator}) was found in \cite{dr}, which is a formal (non-commutative) power series in $h\Omega_{12}$ and $h\Omega_{23}$. Here $\Omega_{12} = \Omega\otimes 1$ and $\Omega_{23} = 1\otimes \Omega$ are elements of $U(\g)^{\otimes 3}$. Moreover, it was shown in \cite{dr} that $\Phi(h\Omega_{12},h\Omega_{23})$ is the exponent of a formal power Lie series in $h\Omega_{12}$ and $h\Omega_{23}$, in particular it depends only on iterated commutators of $h\Omega_{12}$ and $h\Omega_{23}$.
\newline
To be precise it is not enough to work with $k[[h]]$-modules with $\g$-action. To make formulas (\ref{comd}),(\ref{asd}) work we need to consider the category $\Rep_h(\g)$ of modules over $U(\g)[[h]]$. We denote by $\cC_h(\g,\Omega)$ the category $\Rep_h(\g)$ with the braided monoidal structure given by (\ref{comd}),(\ref{asd}). We will call it {\em Drinfeld category} corresponding to a Casimir Lie algebra $\g$. Note that the Drinfeld category $\cC_h(\g,\Omega)$ comes equipped with a balancing structure, with the balancing twist defined by
$$\theta_M(m) = e^{\pi i h\omega}m,\quad m\in M,\quad M\in\Rep(\g).$$ Here $\omega=\mu(\Omega)\in U(\g)$ is the quadratic Casimir (the image of $\Omega$ under the multiplication map $\mu:U(\g)\otimes U(g)\to U(\g)$ of the universal enveloping algebra of $\g$. Indeed, the identity 
$$\Delta(\omega) - \omega\otimes 1 - 1\otimes\omega = 2\Omega$$ implies the balancing axiom.

It happens quite often that (after suitably redefining $\Rep(\g)$) the formal parameter $h$ in Drinfeld category $\cC_h(\g,\Omega)$ can be specialized to a number $c\in k$. For example if $\g$ is a simple Lie algebra then, restricting to the category $\Rep_{fd}(\g)$ of finite dimensional $\g$-modules, one can set $h$ to be any non-rational number $c\in k$ and define braided monoidal category $\cC_c(\g,\Omega)$. Moreover the category $\cC^{fd}_c(\g,\Omega)$ is equivalent to the category of finite dimensional representation of the quantum universal enveloping algebra $U_q(\g)$, where $q=e^{\frac{\pi i}{c}}$ \cite{kl}. 

Another series of examples is provided by nilpotent metric Lie algebras. In that case Drinfeld associator is the exponent of a Lie polynomial in $h\Omega_{12}$ and $h\Omega_{23}$. As well as the phase in (\ref{comd}) the action of this exponent is well defined on finite dimensional $\g$-modules, and hence on modules, with every cyclic submodule being finite dimensional. The following provides an intrinsic characterisation of such $\g$-modules.

We call a module $V$ over a Lie algebra $\g$ {\em locally finite} if for any $x\in\g$ and for any $v\in V$ there is a polynomial $f(t)$ such that $f(x)v=0$. 
\begin{lem}\label{lf}
A $\g$-module $M$ is locally finite if and only if for any $m\in M$ the cyclic submodule $U(\g)m$ is finite dimensional. 
\end{lem}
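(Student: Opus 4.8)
The two implications are of very different character, so I would treat them separately. The implication from finite cyclic submodules to local finiteness is immediate: given $x\in\g$ and $v\in M$, all the vectors $v,xv,x^2v,\dots$ lie in the finite-dimensional space $U(\g)v$, hence are linearly dependent, and any nontrivial dependence is a polynomial $f$ with $f(x)v=0$. So the entire content is the converse, which I would reformulate as the cleaner statement: \emph{if $W\subseteq M$ is finite-dimensional then the submodule $U(\g)W$ is finite-dimensional} (the lemma is the case $W=km$). I would prove this by induction on $\dim\g$, which is where finite-dimensionality of $\g$ enters.

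For the inductive step I would first dispose of the \textbf{solvable} case. If $\g$ is solvable then $[\g,\g]\neq\g$, so any hyperplane $\h$ containing $[\g,\g]$ is a codimension-one ideal, and $\g=\h\oplus kx$ for some $x$. Because $x$ acts locally finitely, $X_0:=k[x]W=\sum_{a\ge0}x^aW$ is finite-dimensional and $x$-invariant. I then set $S:=U(\h)X_0$, which is finite-dimensional by the inductive hypothesis applied to the smaller algebra $\h$ and the finite-dimensional space $X_0$. The point is that $S$ is a $\g$-submodule: it is $\h$-invariant by construction, and for $u\in U(\h)$, $z\in X_0$ the identity $x\,uz=u\,(xz)+[x,u]\,z$ shows $xS\subseteq S$, since $xz\in X_0$ and $\mathrm{ad}(x)$ preserves $U(\h)$ (as $\h$ is an ideal), so that $[x,u]\in U(\h)$ and both terms lie in $U(\h)X_0=S$. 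Thus $U(\g)W\subseteq S$ is finite-dimensional.

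To pass to arbitrary $\g$ I would use PBW factorizations to reduce to the solvable case. For semisimple $\g$ there is no codimension-one ideal, but the triangular decomposition $\g=\mathfrak{n}^-\oplus\h\oplus\mathfrak{n}^+$ gives $U(\g)=U(\mathfrak{n}^-)\,U(\mathfrak{b}^+)$ with $\mathfrak{b}^\pm$ the (solvable) Borels, so $U(\g)W=U(\mathfrak{n}^-)\bigl(U(\mathfrak{b}^+)W\bigr)\subseteq U(\mathfrak{b}^-)\bigl(U(\mathfrak{b}^+)W\bigr)$ is finite-dimensional by applying the solvable case twice. For a general $\g$ the Levi decomposition $\g=\s\ltimes\mathfrak{r}$ and the factorization $U(\g)=U(\mathfrak{r})\,U(\s)$ reduce $U(\g)W$ first to the semisimple case (bounding $U(\s)W$) and then to the solvable case (bounding $U(\mathfrak{r})$ of that).

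The main obstacle is the non-commutativity, and it is entirely concentrated in the inductive step. Closing $W$ under the generators one at a time need not terminate, because commutators $[x_i,x_j]$ reintroduce all of $\g$; the device that forces termination is to close first under a single transversal generator $x$ and only then under an ideal $\h$, using crucially that $\mathrm{ad}(x)$ stabilizes $U(\h)$. This is exactly where solvability (a tower of ideals with one-dimensional quotients) is doing the work, and the triangular and Levi factorizations are merely the bookkeeping that exports it to all finite-dimensional $\g$. The one routine check I would want to get right is that these are genuine factorizations of $U(\g)$ as a product of the two subspaces, which is immediate from an ordered PBW basis.
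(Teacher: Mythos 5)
Your proof is correct, but it takes a genuinely different and much longer road than the paper's. The paper handles the nontrivial direction directly from the Poincar\'e--Birkhoff--Witt theorem with an \emph{arbitrary} ordered basis $x_1,\dots,x_n$ of $\g$: the cyclic module $U(\g)m$ is spanned by the ordered monomials $x_1^{i_1}\cdots x_n^{i_n}m$, and a right-to-left iteration shows this span is finite dimensional --- $\operatorname{span}\{x_n^{i}m\}$ is finite dimensional by local finiteness, and if $v_1,\dots,v_k$ is a basis of the span obtained after processing $x_{j+1},\dots,x_n$, then adjoining $x_j$ lands inside $\sum_l \operatorname{span}\{x_j^{a}v_l\}$, again finite dimensional. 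The non-commutativity issue you identify (commutators reintroducing generators already processed) is exactly what PBW disposes of: one never re-closes under earlier generators because the ordered monomials already span $U(\g)m$, so no ideal structure is needed and the solvable/triangular/Levi case analysis is superfluous. Your argument is nonetheless sound: the codimension-one-ideal step is correct (the identity $x(uz)=u(xz)+[x,u]z$ with $[x,u]\in U(\h)$ since $\operatorname{ad}(x)$ preserves the ideal $\h$ and hence $U(\h)$), and the factorizations $U(\g)=U(\mathfrak{n}^-)U(\mathfrak{b}^+)$ and $U(\g)=U(\mathfrak{r})U(\s)$ follow from suitably ordered PBW bases. What your route buys is an explicit view of where solvability does the work; what the paper's route buys is brevity and complete independence from the structure theory of $\g$ (no Levi or triangular decomposition, hence no appeal to semisimplicity at all), which matters little here since the paper only ever applies the lemma to abelian and nilpotent algebras anyway.
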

\begin{proof}
If $U(\g)m$ is finite dimensional then so is $\g m$. In particular a linear combination of $x^im$ is zero.

Conversely, choose a basis $\{x_1,...,x_n\}$ in $\g$. 
By Poincare-Birkhoff-Witt theorem $U(\g)m$ is spanned by $\{x_1^{i_1}...x_n^{i_n}m| i_1,...,i_n\in\Z_{\geq 0}\}$. We are going to show that $i_j,j=1,...,n$ are all bounded. 
By local finiteness of a $\g$-module $M$ the span of $\{x_n^{i_n}m\}$ is finite dimensional, i.e. $i_n$ is bounded say $i_n\leq s_n$. For any $i_n=1,...,s_n$ the span $\{x^{i_{n-1}}_{n-1}x_n^{i_n}m\}$ is again finite dimensional. By continuing this argument we get that $\{x_1^{i_1}...x_n^{i_n}m| i_1,...,i_n\in\Z_{\geq 0}\}$ is finite dimensional. 
\end{proof}
In particular, tensor product of locally finite modules is locally finite.
For a nilpotent Lie algebra $\g$ we will denote by $\cC_c(\g,\Omega)$ the category of locally finite $\g$-modules with the braided monoidal structure given by (\ref{comd}),(\ref{asd}) where $h$ is replaced by $c\in k^\cro$. Note that $\cC_c(\g,\Omega) = \cC_1(\g,c\Omega)$. In particular, for abelian $\g$ the category $\cC_c(\g,\Omega)$ does not depend on $c$ (since all metric structures on an abelian Lie algebra are equivalent). Thus for abelian $\g$ we will denote $\cC_c(\g,\Omega)$ simply by $\cC(\g,\Omega)$. 

\section{Commutative algebras in Drinfeld categories of abelian Lie algebras}\label{comdr}

Let $\h$ be an abelian metric Lie algebra with the non-degenerate form $(\da,\da)$ and the Casimir $\Omega$. Let $\cC(\h,\Omega)$ be the corresponding Drinfeld category, i.e. the category of locally finite $\h$-modules with ordinary tensor product and associativity constraint and with the braiding defined by 
\begin{equation}\label{br}
c_{M,N}(m\otimes n) = e^{\pi i\Omega}(n\otimes m),\quad m\in M, n\in N. 
\end{equation}
Any locally finite $\h$-module $V$ can be written as a sum
$$V = \bigoplus_{x\in\h}V_x,$$
where $V_x = \{v\in V|\ (y-(y,x)1)^{n_y}v=0,\ \forall y\in\h\}$ is a generalised eigenspace with the character $(x,\da)\in\h^*$. Indeed, 
for any $v\in V$, the space $\tilde{V}=U(\g)v$ is finite dimensional by lemma~\ref{lf}. Hence, $\tilde{V}\subset \bigoplus_{x\in\h}V_x\subset V$, and the statement follows.

We call the subset 
$$\gl(V) = \{x\in\h|\ V_x\not=0\}\subset\h$$ the {\em support} of $V$. 

The following is a strictification of lemma (\ref{lf}).
\begin{lem}\label{gi}
For $V,U\in\cC(\h,\Omega)$ 
$$(V\otimes U)_x = \bigoplus_{y+z=x}V_y\otimes U_z.$$
\end{lem}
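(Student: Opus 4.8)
The plan is to reduce the statement to ordinary representation theory of the abelian Lie algebra $\h$, since in the Drinfeld category $\cC(\h,\Omega)$ the underlying $\h$-module structure on $V\otimes U$ is the ordinary one, $y\cdot(v\otimes u) = yv\otimes u + v\otimes yu$ for $y\in\h$ (only the braiding, not the tensor product or the module structure, is deformed). Because $V$ and $U$ are locally finite, so is $V\otimes U$ (as noted after Lemma~\ref{lf}), and hence $V\otimes U$ admits a generalised eigenspace decomposition $V\otimes U = \bigoplus_{x\in\h}(V\otimes U)_x$. Thus it suffices to match this decomposition with the one coming from the two factors.

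First I would establish the inclusion $V_y\otimes U_z\subseteq(V\otimes U)_{y+z}$. Fix $v\in V_y$, $u\in U_z$ and $w\in\h$. On $V_y$ the operator $w$ acts as $(w,y)1 + N^V_w$ with $N^V_w$ nilpotent, and on $U_z$ as $(w,z)1 + N^U_w$ with $N^U_w$ nilpotent. Using bilinearity of the form, $(w,y)+(w,z)=(w,y+z)$, so on $V_y\otimes U_z$ we have
$$w - (w,y+z)1 = N^V_w\otimes 1 + 1\otimes N^U_w.$$
The two summands on the right are commuting nilpotent operators, so their sum is nilpotent; therefore $\bigl(w-(w,y+z)1\bigr)^{n_w}$ annihilates $V_y\otimes U_z$ for $n_w$ large. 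As this holds for every $w\in\h$, we obtain $V_y\otimes U_z\subseteq(V\otimes U)_{y+z}$, and in particular $\bigoplus_{y+z=x}V_y\otimes U_z\subseteq(V\otimes U)_x$ for each $x\in\h$.

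To finish I would compare the two gradings of $V\otimes U$. From $V=\bigoplus_yV_y$ and $U=\bigoplus_zU_z$ one gets $V\otimes U = \bigoplus_{y,z}V_y\otimes U_z = \bigoplus_{x}\bigl(\bigoplus_{y+z=x}V_y\otimes U_z\bigr)$. We now have two $\h$-submodule decompositions of $V\otimes U$ indexed by $x\in\h$, the pieces $\bigoplus_{y+z=x}V_y\otimes U_z$ and the generalised eigenspaces $(V\otimes U)_x$, the former contained in the latter term by term. The elementary fact that two decompositions $W=\bigoplus_x A_x=\bigoplus_x B_x$ with $A_x\subseteq B_x$ must satisfy $A_x=B_x$ (any $b\in B_x$ expands in the $A$-decomposition, and directness of the $B$-decomposition forces all but the $x$-component to vanish) then upgrades the inclusions to equalities, giving $(V\otimes U)_x=\bigoplus_{y+z=x}V_y\otimes U_z$.

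I do not expect a genuine obstacle here: the only point requiring care is the guarantee that $V\otimes U$ is locally finite, so that the eigenspace decomposition on the left-hand side is defined at all, which is precisely the remark recorded after Lemma~\ref{lf}. The remaining ingredients are the commuting-nilpotents computation and the comparison of compatible gradings, both routine.
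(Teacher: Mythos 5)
Your proposal is correct and follows essentially the same route as the paper: both establish the inclusion $V_y\otimes U_z\subseteq(V\otimes U)_{y+z}$ by expanding $\bigl(w-(w,y+z)1\bigr)^N$ on $v\otimes u$ (your commuting-nilpotents phrasing is just a repackaging of the paper's explicit binomial identity), and both then conclude by comparing the two direct-sum decompositions of $V\otimes U$. The only cosmetic caveat is that $N^V_w$ is locally nilpotent rather than nilpotent on all of $V_y$, but since the claim is checked vector by vector this changes nothing.
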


\begin{proof}
By lemma~\ref{lf} we have
$$V\otimes U = \bigoplus_{x\in\h}(V\otimes U)_x.$$ On the other hand, 
$V = \bigoplus_{y\in\h}V_y$, $U = \bigoplus_{z\in\h}U_z$ and hence
$$V\otimes U=\bigoplus_{y,z\in\h}V_y\otimes U_z.$$ Taking into account that for any $v\in V$, $u\in U$, $x, h\in \h$ and any $y,z\in \h$ such that $x=y+z$ we have  
$$(h-(x,h)1)^N(v\otimes u)=\sum_{k=0}^N \left( {\begin{array}{*{20}c} N \\ k \\ \end{array}} \right)(h-(y,h)1)^k v\otimes (h-(z,h)1)^{N-k} u,
$$
we conclude that $V_y\otimes U_z\subset (V\otimes U)_x$.  Since the subspaces $(V\otimes U)_x$ and $(V\otimes U)_{x'}$ do not intersect if $x\neq x'$, the statement follows. 
\end{proof}

An algebra $A$ in the category $\cC(\h,\Omega)$ is an associative algebra with an $\h$-action by derivations. It follows from lemma (\ref{gi}) that the decomposition 
\begin{equation}\label{algg}
A = \bigoplus_{x\in\gl(A)}A_x
\end{equation}
into the sum of generalised eigenspaces is an algebra grading, i.e. $A_xA_y\subset A_{x+y}$. 
\newline
A bilinear form $\beta:A\otimes A\to k$ on an algebra $A$ is called {\em multiplicative} if 
$$\beta(ab,c) = \beta(a,bc),\quad \forall a,b,c\in A.$$

\begin{prop}
Let $A$ be an algebra in the category $\cC(\h,\Omega)$ with a non-degenerate multiplicative bilinear form and with the trivial subalgebra of invariants $A^\h=k$. Then $\gl=\gl(A)$ is a subgroup of $\h$ and $A$ is isomorphic to a skew group algebra $k[\gl,\alpha]$ for some 2-cocycle $\alpha:\gl\times\gl\to k^\cro$, i.e.
$A$ is spanned by $e_x, x\in\gl$ with multiplication 
\begin{equation}\label{skg}
e_xe_y = \alpha(x,y)e_{x+y}.
\end{equation} 
\newline
The $\h$-action has a form: 
\begin{equation}\label{ac}
y(e_x) = (x,y)e_x,\quad \forall x\in\gl, y\in\h.
\end{equation}
\end{prop}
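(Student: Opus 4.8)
The plan is to read off everything from the grading $A=\bigoplus_{x\in\gl}A_x$ supplied by lemma~\ref{gi}, combined with the observation that a form $\beta:A\otimes A\to k$ in $\cC(\h,\Omega)$ is a morphism to the unit object and is therefore $\h$-invariant, $\beta(ya,b)+\beta(a,yb)=0$ for all $y\in\h$. Since the unit $k$ lives in degree $0$, lemma~\ref{gi} forces $\beta(A_x,A_y)=0$ whenever $x+y\ne0$, so $\beta$ restricts to a \emph{perfect} pairing $A_x\otimes A_{-x}\to k$; in particular $-\gl=\gl$. Putting $c=1$ in the multiplicativity axiom gives $\beta(ab,1)=\beta(a,b)$, so for any $0\ne a\in A_x$ there is $b\in A_{-x}$ with $0\ne ab\in A_0$; thus the form detects homogeneous products.

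The heart of the argument, and the step I expect to be the main obstacle, is to show $A_0=k\cdot1$. On $A_0$ the generalised eigenvalue is the zero character, so $\h$ acts by commuting \emph{locally nilpotent} derivations $D_y$, and $A_0^\h=A^\h=k\cdot1$. I would set $N=\sum_{y\in\h}D_y(A_0)$, a two-sided ideal. The purely module-theoretic fact ``locally nilpotent action with one-dimensional invariants implies trivial algebra'' is \emph{false} without the form (witness $A_0=k[t]$ with $D=\tfrac{d}{dt}$), so non-degeneracy must be used essentially. Concretely, skew-adjointness of each $D_y$ identifies the iterated-image ideals $N^{(p)}=\sum D_{y_1}\cdots D_{y_p}(A_0)$ with the annihilators of the nilpotency filtration $F_{p-1}=\{a:D_{y_1}\cdots D_{y_p}a=0\ \forall y_i\}$ via $(N^{(p)})^{\perp}=F_{p-1}$; since local nilpotence gives $A_0=\bigcup_pF_p$ and the restricted form is non-degenerate, this yields $\bigcap_pN^{(p)}=0$. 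Were $N=A_0$, an easy induction would give $N^{(p)}=A_0$ for all $p$ and hence $A_0=0$, absurd; so $N\ne A_0$, and as $N$ is an ideal this means $1\notin N$, i.e. $k\cdot1\cap N=0$. Finally, if $N\ne0$ then Engel's theorem applied inside a finite-dimensional $\h$-stable submodule of $N$ (available by local finiteness, lemma~\ref{lf}) produces a nonzero common kernel vector of the $D_y$ lying in $A_0^\h=k\cdot1$, contradicting $k\cdot1\cap N=0$. Hence $N=0$, so $\h$ acts trivially on $A_0$ and $A_0=A_0^\h=k\cdot1$.

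Everything after $A_0=k$ is routine. For $0\ne a\in A_x$ the perfect pairing gives $b\in A_{-x}$ with $\beta(a,b)\ne0$, whence $ab\in A_0=k$ is a nonzero scalar; rescaling, $ab=1$, and a one-line manipulation with $a(ba)=(ab)a$ shows $ba=1$, so every nonzero homogeneous element is invertible. This immediately makes $\gl$ a subgroup: $0\in\gl$ since $1\in A_0$, inverses give $-\gl=\gl$, and for $x,y\in\gl$ invertible representatives satisfy $ab\ne0\in A_{x+y}$, so $x+y\in\gl$. Invertibility also forces $\dim A_x=1$, since for any $a'\in A_x$ one has $a^{-1}a'\in A_0=k$; as $A_x$ is a one-dimensional $\h$-stable line with generalised eigenvalue $(x,\da)$, the action is genuine, $y(e_x)=(x,y)e_x$, giving (\ref{ac}). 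Choosing $e_0=1$ and a basis vector $e_x$ of each $A_x$, multiplicativity of the grading gives $e_xe_y=\alpha(x,y)e_{x+y}$ with $\alpha(x,y)\in k^\cro$, and associativity $(e_xe_y)e_z=e_x(e_ye_z)$ is exactly the $2$-cocycle identity $\alpha(x,y)\alpha(x+y,z)=\alpha(y,z)\alpha(x,y+z)$, identifying $A\cong k[\gl,\alpha]$ as in (\ref{skg}).
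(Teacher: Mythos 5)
Your overall route coincides with the paper's: invariance of $\beta$ (as a morphism in $\cC(\h,\Omega)$) makes it compatible with the grading, homogeneous elements are shown to be invertible via the pairing $A_x\otimes A_{-x}\to k$, the spaces $A_x$ are then lines on which the generalised eigenvalue must be genuine, and associativity yields the $2$-cocycle. You have also correctly isolated where the real content lies: the paper's line $ab=\beta(ab,1)1$ silently uses that the zero-weight \emph{generalised} eigenspace $A_0$ (a priori larger than $A^\h$) reduces to $k\cdot 1$, and your $k[t]$, $D=d/dt$ example rightly shows that non-degeneracy of the form must be used in an essential way to get this.

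The gap is in your proof of $A_0=k\cdot 1$, at the unproved assertion that $N=\sum_{y\in\h}D_y(A_0)$ is a two-sided ideal. The image of a locally nilpotent derivation need not be an ideal: for $A=k[x,y,z]$ and $D=x\partial_y+y\partial_z$ one has $x=D(y)\in D(A)$ but $xz\notin D(A)$ (check this on the degree-two part, which $D$ preserves). The Leibniz identity only gives $NA_0\subseteq N+A_0N$, which is circular, and nothing else in your argument supplies the ideal property --- indeed it becomes true only a posteriori, once $N=0$. This claim is precisely what you use to pass from $N\neq A_0$ to $1\notin N$, so the chain $1\notin N\Rightarrow k\cdot1\cap N=0\Rightarrow N=0$ does not close as written. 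The substitutes actually available at that point --- that $N$ is an $\h$-submodule, and that $\beta(N,1)=\beta(1,N)=0$ by invariance --- would yield $1\notin N$ only if you also knew $\beta(1,1)\neq 0$, which is not established there. Your computations $(N^{(p)})^{\perp}=F_{p-1}$ and $\bigcap_pN^{(p)}=0$ are correct and useful; combined with your Engel observation they do show that some $N^{(p_0)}$ must vanish (otherwise $1$ would lie in every $N^{(p)}$, hence in their intersection), but deducing $A_0=k\cdot 1$ from the resulting finite filtration with $F_0=k\cdot 1$ still requires an argument that is missing. Everything after $A_0=k\cdot 1$ is fine and agrees with the paper.
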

\begin{proof}
We are going to show first that the presence of a non-degenerate multiplicative bilinear form together with the condition $A^\h=k$ force homogeneous elements with respect to the grading (\ref{algg}) to be invertible. Indeed, multiplicativity of a non-degenerate form implies that it is compatible with the grading, in particular for any $x\in\gl$ the restriction $\beta\se A_x\otimes A_{-x}\to k$ is non-degenerate. From the other side, multiplicativity of $\beta$ and the condition $A^\h=k$ imply that for $a\in A_x, b\in A_{-x}$ 
$$ab = \beta(ab,1)1 = \beta(a,b)1.$$
By non-degeneracy of $\beta$ for any non-zero $a$ there is $b$ such that $\beta(a,b)=1$. Hence $b=a^{-1}$. 
Now we can show that generalised eigenspaces $A_x$ are at most one-dimesional. Indeed for non-zero $a,b\in A_x$ we have that $b^{-1}a = \lambda 1$ for some $\lambda\in k$. Thus $a=\lambda b$. 

By choosing non-zero elements $e_x\in A_x$ for $x\in\gl$ we see that the multiplication should have a form (\ref{skg}) for some non-zero $\alpha:\gl\times\gl\to k^\cro$. Associativity of the multiplication implies that $\alpha$ is a normalised 2-cocycle.

Finally, being unique (up to a scalar) generalized eigenvector with a given character, $e_x$ has to be a genuine eigenvector, i.e. the $\h$-action on it has to have a form (\ref{ac}). 
\end{proof}

An algebra $A$ in $\cC(\h,\Omega)$ is {\em commutative} if 
\begin{equation}\label{com}
\mu(e^{\pi i\Omega}(b\otimes a)) = ab,\quad \forall a,b\in A.
\end{equation}

First we calculate the effect of Casimir on tensor product of eigenvectors.
\begin{lem}\label{cas}
Let $a,b\in A$ are such that 
$$z(a) = (x,z)a,\quad z(b) = (y,z)b,\quad \forall z\in\h.$$
Then
$$\Omega(b\otimes a) = (x,y)b\otimes a,\quad \omega(a) = (x,x)a.$$
\end{lem}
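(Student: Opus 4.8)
The plan is to verify both identities by a direct computation, expanding the Casimir in an orthonormal basis. Fix an orthonormal basis $\{u_i\}$ of $\h$, so that $(u_i,u_j)=\delta_{i,j}$ and $\Omega=\sum_i u_i\otimes u_i$ as recalled in section \ref{drcat}. The hypotheses say that $a$ and $b$ are genuine eigenvectors for the $\h$-action, with eigenvalue functionals $(x,\da)$ and $(y,\da)$ respectively; since $\h$ is abelian this is all the structure I need.

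First I would compute the action on $b\otimes a$. Since $\Omega$ lies in $\h\otimes\h\subset U(\h)\otimes U(\h)$, its first tensor factor acts on the first component of $b\otimes a$ and its second factor on the second. Feeding in the eigenvector relations $u_i(b)=(y,u_i)b$ and $u_i(a)=(x,u_i)a$ gives
$$\Omega(b\otimes a)=\sum_i u_i(b)\otimes u_i(a)=\Big(\sum_i (y,u_i)(x,u_i)\Big)\,b\otimes a.$$
It then remains to identify the scalar. Expanding $x$ in the orthonormal basis as $x=\sum_i(x,u_i)u_i$ and pairing with $y$ yields $(x,y)=\sum_i(x,u_i)(y,u_i)$, which is exactly the coefficient above; hence $\Omega(b\otimes a)=(x,y)\,b\otimes a$. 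Invariantly, the same step is nothing but the defining property $\sum_i(y,\Omega^1_i)\Omega^2_i=y$ of the Casimir applied inside the bilinear form, so no choice of basis is really forced.

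For the second identity I would use $\omega=\mu(\Omega)=\sum_i u_i^2\in U(\h)$ and apply it to $a$, iterating the eigenvector relation once: $u_i^2(a)=u_i\big((x,u_i)a\big)=(x,u_i)^2 a$, so that $\omega(a)=\big(\sum_i(x,u_i)^2\big)a=(x,x)a$ by the same self-duality identity with $y=x$.

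There is no real obstacle here; the only genuine ingredient is the self-duality identity $\sum_i(x,u_i)(y,u_i)=(x,y)$ furnished by orthonormality of $\{u_i\}$ (equivalently, the defining property of $\Omega$). Everything else is routine bookkeeping with the componentwise $\h$-action on the tensor product, which is legitimate precisely because $\h$ is abelian and $a,b$ are genuine, rather than merely generalised, eigenvectors.
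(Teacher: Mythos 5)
Your proof is correct and is essentially identical to the paper's: both expand $\Omega=\sum_i u_i\otimes u_i$ in an orthonormal basis, apply the factors componentwise to $b\otimes a$ (resp.\ iterate on $a$ for $\omega$), and identify the resulting scalar $\sum_i(x,u_i)(y,u_i)$ with $(x,y)$. The only difference is that you spell out the self-duality identity explicitly, which the paper leaves implicit.
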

\begin{proof}
Writing $\Omega = \sum_i x_i\otimes x_i$ in the orthonormal basis for the form $(x_i,x_j)=\delta_{i,j}$ we get
$$\Omega(b\otimes a) = \sum_i x_i(b)\otimes x_i(a) = \sum_i(y,x_i)(x,x_i)(b\otimes a) = (x,y)(b\otimes a)$$
and
$$\omega(a) = \sum_i x_ix_i(a) = \sum_i(x,x_i)(x,x_i)a = (x,x)a.$$
\end{proof}

It is well known that up to an isomorphism the skew group algebra $k[\gl,\alpha]$ depend only on the cohomology class of the cocycle $\alpha$. 

Here we assume that our ground field $k$ is the field of complex numbers $\C$.
\begin{theo}\label{coa}
A commutative algebra $A$ in the braided category $\cC(\h,\Omega)$, which has a non-degenerate multiplicative bilinear form and has trivial subalgebra of invariants $A^\h=k$, has a form  $k[\gl,\alpha]$, where $\gl$ is a subgroup of $\h$ such that the restriction of the form on $\gl$ is integer and even 
$$(x,y)\in\Z,\quad (x,x)\in 2\Z,\quad x,y\in\gl.$$
The cohomology class of the cocycle $\alpha$ (and hence the isomorphism class of $k[\gl,\alpha]$) is uniquely defined by the condition:
\begin{equation}\label{cc}
\frac{\alpha(x,y)}{\alpha(y,x)} = e^{\pi i(x,y)}.
\end{equation}
\end{theo}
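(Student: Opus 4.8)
The plan is to build on the preceding proposition, which already identifies $A$ with a skew group algebra $k[\gl,\alpha]$ carrying the multiplication (\ref{skg}) and the $\h$-action (\ref{ac}), and to extract all three conclusions from the commutativity constraint (\ref{com}) evaluated on homogeneous generators. First I would apply (\ref{com}) to $a=e_x$ and $b=e_y$. By Lemma \ref{cas} the Casimir acts on $e_y\otimes e_x$ by the scalar $(x,y)$, so $e^{\pi i\Omega}(e_y\otimes e_x)=e^{\pi i(x,y)}\,e_y\otimes e_x$; multiplying and using (\ref{skg}) turns the left-hand side of (\ref{com}) into $e^{\pi i(x,y)}\alpha(y,x)e_{x+y}$, while the right-hand side is $\alpha(x,y)e_{x+y}$. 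Comparing coefficients gives exactly (\ref{cc}).

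Next I would read off integrality and evenness from (\ref{cc}). Since the form is symmetric, writing (\ref{cc}) for the ordered pair $(y,x)$ and multiplying it with (\ref{cc}) for $(x,y)$ eliminates $\alpha$ and yields $e^{2\pi i(x,y)}=1$, i.e.\ $(x,y)\in\Z$. Setting $y=x$ in (\ref{cc}) gives $1=e^{\pi i(x,x)}$, i.e.\ $(x,x)\in2\Z$. In particular the right-hand side of (\ref{cc}), namely $b(x,y)=e^{\pi i(x,y)}$, is a genuine alternating bicharacter on $\gl$: multiplicative bilinearity is the bilinearity of $(\da,\da)$, and $b(x,x)=1$ is precisely the evenness just established. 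This also shows the prescription (\ref{cc}) is consistent.

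Finally, for the uniqueness of the cohomology class I would invoke the classical description of the second cohomology of an abelian group with coefficients in $\C^\times$. The commutator pairing $\alpha\mapsto b_\alpha$, with $b_\alpha(x,y)=\alpha(x,y)/\alpha(y,x)$, is constant on cohomology classes and defines a homomorphism from $H^2(\gl,\C^\times)$ to the group of alternating bicharacters on $\gl$. Its kernel consists of classes of symmetric cocycles, which classify abelian extensions of $\gl$ by $\C^\times$, i.e.\ $\mathrm{Ext}^1_\Z(\gl,\C^\times)$; since $\C^\times$ is divisible, hence an injective $\Z$-module, this group vanishes, so the commutator map is injective. Thus a class in $H^2(\gl,\C^\times)$ is determined by its commutator pairing, and (\ref{cc}) pins down $[\alpha]$ uniquely.

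I expect this last step to be the main obstacle: producing (\ref{cc}) and deriving integrality and evenness are short direct computations, whereas the uniqueness rests on the fact that over $\C$ the second group cohomology of an abelian group is carried faithfully by the alternating commutator pairing. I would either cite this standard result or, since $\gl$ need not be finitely generated, supply the short argument via the vanishing of $\mathrm{Ext}^1_\Z(\gl,\C^\times)$ sketched above, which is presumably among the technical tools collected in the appendix.
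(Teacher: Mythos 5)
Your proposal is correct and follows essentially the same route as the paper: deriving (\ref{cc}) from (\ref{com}) via Lemma \ref{cas}, extracting integrality and evenness (you get integrality by symmetrising (\ref{cc}) and then evenness by setting $y=x$, whereas the paper gets evenness first and then integrality by polarisation -- an immaterial reordering), and pinning down $[\alpha]$ by the vanishing of $Ext(\gl,\C^\cro)$ for the divisible group $\C^\cro$, which the paper phrases via the universal-coefficients exact sequence $0\to Ext(\gl,k^\cro)\to H^2(\gl,k^\cro)\to Hom(\Lambda^2_\Z\gl,k^\cro)\to 0$. No gaps.
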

\begin{proof}
Putting $a=e_x$ and $b=e_y$ in the equation (\ref{com}) and using lemma \ref{cas} we get
$$\alpha(x,y)e_{x+y} = e_xe_y = e^{\pi i(x,y)}e_ye_x = \alpha(y,x)e^{\pi i(x,y)}e_{x+y},$$ which gives equation (\ref{cc}). 
By setting $x=y$ we have that $e^{\pi i(x,x)}=1$ for all $x\in\gl$, which means that $(x,x)$ must be an even integer. This implies that $(x,y)$ is an integer for any $x,y\in\gl$ since 
$$(x,y) = \frac{1}{2}((x+y,x+y) - (x,x) - (y,y)).$$
By the exact sequence of universal coefficients:
$$\xymatrix{0 \ar[r] & Ext(\gl,k^\cro) \ar[r] & H^2(\gl,k^\cro) \ar[r] & Hom(\Lambda^2_\Z\gl,k^\cro)\ar[r] & 0}$$
The group $k^\cro=\C^\cro=\C/\Z$ is divisible, hence $Ext(\gl,k^\cro)=0$ (see \cite{ce}) and the cohomology class of $\alpha$ is uniquely defined by its skew-symmetrisation. 
\end{proof}

\begin{corr}
Isomorphism classes of commutative algebras $A$ in the braided category $\cC(\h,\Omega)$, which have a non-degenerate multiplicative bilinear form and have trivial subalgebra of invariants $A^\h=k$, are in one to one correspondence with subgroups $\gl\subset\h$ such that the restriction of the form on $\gl$ is integer and even 
\begin{equation}\label{ii}
(x,y)\in\Z,\quad (x,x)\in 2\Z,\quad x,y\in\gl.
\end{equation}
\end{corr}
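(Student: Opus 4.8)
The plan is to deduce the bijection directly from Theorem \ref{coa}, organising the argument as well-definedness, injectivity and surjectivity of the assignment $A\mapsto\gl(A)$. Well-definedness is immediate: Theorem \ref{coa} shows every algebra $A$ with the listed properties is of the form $k[\gl,\alpha]$ with $\gl=\gl(A)$ satisfying (\ref{ii}), and the support $\gl(A)$ is an isomorphism invariant because any algebra isomorphism in $\cC(\h,\Omega)$ is $\h$-equivariant and hence preserves the decomposition of $A$ into generalised eigenspaces. So the map is well defined on isomorphism classes and takes values in the set of integer-even subgroups of $\h$.

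For injectivity I would take two algebras $k[\gl,\alpha]$ and $k[\gl,\alpha']$ with the same support. Commutativity pins down, via (\ref{cc}), both skew-symmetrisations $\alpha(x,y)/\alpha(y,x)$ and $\alpha'(x,y)/\alpha'(y,x)$ to the single function $e^{\pi i(x,y)}$; since $k^\cro=\C^\cro$ is divisible the $Ext$ term in the universal coefficients sequence of Theorem \ref{coa} vanishes, so a class in $H^2(\gl,k^\cro)$ is determined by its image in $Hom(\Lambda^2_\Z\gl,k^\cro)$, forcing $\alpha$ and $\alpha'$ to be cohomologous. A cohomology between them then yields an algebra isomorphism $e_x\mapsto f(x)e_x'$ for a suitable $f:\gl\to k^\cro$, which is homogeneous of degree $0$ and therefore intertwines the $\h$-actions (\ref{ac}), i.e. is an isomorphism in $\cC(\h,\Omega)$.

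Surjectivity is the part carrying real content. Given any subgroup $\gl\subset\h$ satisfying (\ref{ii}), I would first check that $b(x,y)=e^{\pi i(x,y)}$ defines an element of $Hom(\Lambda^2_\Z\gl,k^\cro)$: biadditivity of $(\da,\da)$ gives $b(x+x',y)=b(x,y)b(x',y)$, while the even condition $(x,x)\in 2\Z$ gives $b(x,x)=1$, so $b$ is alternating and biadditive and hence factors through $\Lambda^2_\Z\gl$. Surjectivity of the universal coefficients map (an isomorphism here, as $Ext$ vanishes) then produces a normalised $2$-cocycle $\alpha$ with skew-symmetrisation $b$. Setting $A=k[\gl,\alpha]$ with the $\h$-action (\ref{ac}) gives an object of $\cC(\h,\Omega)$ with support $\gl$, and commutativity is the computation of Theorem \ref{coa} run backwards: by Lemma \ref{cas}, $\mu(e^{\pi i\Omega}(e_y\otimes e_x))=e^{\pi i(x,y)}\alpha(y,x)e_{x+y}=\alpha(x,y)e_{x+y}=e_xe_y$, using (\ref{cc}).

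It remains to exhibit the two defining features of the target class. The pairing $\beta(e_x,e_y)=\alpha(x,y)\delta_{x+y,0}$, i.e. the coefficient of $1=e_0$ in $e_xe_y$, is multiplicative precisely because the identity $\beta(e_xe_y,e_z)=\beta(e_x,e_ye_z)$ unwinds to the $2$-cocycle equation $\alpha(x,y)\alpha(x+y,z)=\alpha(y,z)\alpha(x,y+z)$, and it is non-degenerate since $\beta(e_x,e_{-x})=\alpha(x,-x)\in k^\cro$; triviality of invariants is immediate from (\ref{ac}), as $y(e_x)=(x,y)e_x$ vanishes for all $y\in\h$ only when $x=0$ by non-degeneracy of the form on $\h$, whence $A^\h=ke_0=k$. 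The only genuinely substantive step is this surjectivity argument, and within it the crux is verifying that $b$ is alternating: this is exactly where the even condition $(x,x)\in 2\Z$ is indispensable, whereas everything else is a direct repackaging of Theorem \ref{coa}.
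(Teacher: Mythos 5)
Your proof is correct and follows essentially the same route as the paper, which states this corollary without proof as an immediate consequence of Theorem \ref{coa}: uniqueness is exactly the theorem's cohomological argument, and your existence direction (constructing $\alpha$ from the alternating bicharacter $e^{\pi i(x,y)}$ via the universal coefficients sequence, then checking commutativity, the multiplicative form $\beta(e_x,e_y)=\alpha(x,y)\delta_{x+y,0}$, and $A^\h=k$) is the implicit content the paper omits. All the verifications you supply are accurate, including the key observation that evenness of $(x,x)$ is precisely what makes $e^{\pi i(x,y)}$ alternating.
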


\section{Local modules in Drinfeld categories of abelian Lie algebras}\label{locmod}

Let $A$ be a commutative algebra in $\cC(\h,\Omega)$. A left $A$-module in $\cC(\h,\Omega)$ is {\em local} if 
\begin{equation}\label{loc}
\mu(e^{2\pi i\Omega}(a\otimes m)) = am,\quad \forall a\in A, m\in M.
\end{equation}

The following is a generalization of lemma \ref{cas}.
\begin{lem}\label{casm}
Let $A$ be an algebra and $M$ be a left $A$-module in $\cC(\h,\Omega)$.
Let $a\in A$ is such that 
$z(a) = (x,z)a,$ for all $z\in\h.$
Then
$$\Omega(a\otimes m) = a\otimes x(m),\quad \forall m\in M.$$
\end{lem}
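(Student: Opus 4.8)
The plan is to mimic the computation in Lemma~\ref{cas}, the only difference being that $m$ need not be a generalized eigenvector, so the second tensor factor cannot be collapsed to a scalar and must instead be repackaged as the action of $x$ itself. First I would fix an orthonormal basis $\{x_i\}$ of $\h$, so that $\Omega=\sum_i x_i\otimes x_i$, and recall that the action of $\Omega\in\h\otimes\h\subset U(\h)\otimes U(\h)$ on the tensor product $A\otimes M$ is the tensor-product (coproduct) action, exactly as used in (\ref{br}) and in the proof of Lemma~\ref{cas}. This gives
$$\Omega(a\otimes m)=\sum_i x_i(a)\otimes x_i(m).$$

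Next I would use the hypothesis that $a$ is an eigenvector with character $(x,\da)$, i.e.\ $x_i(a)=(x,x_i)a$ for each $i$. Pulling these scalars out of the first factor yields
$$\Omega(a\otimes m)=\sum_i (x,x_i)\,a\otimes x_i(m)=a\otimes\Big(\sum_i (x,x_i)\,x_i(m)\Big).$$
The decisive step is then to recognize the inner sum. Since $\{x_i\}$ is orthonormal, any element of $\h$ expands as $x=\sum_i (x,x_i)\,x_i$, because pairing the right-hand side with $x_j$ recovers $(x,x_j)$. Applying this identity (as an operator on $M$) gives $\sum_i (x,x_i)\,x_i(m)=x(m)$, and therefore $\Omega(a\otimes m)=a\otimes x(m)$, as claimed.

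There is essentially no obstacle here: the argument is purely the orthonormal-basis bookkeeping already present in Lemma~\ref{cas}. The one point deserving care is that, unlike in Lemma~\ref{cas} where both tensor factors are eigenvectors, here only the $A$-factor is, so the contraction of $\Omega$ must be performed on the \emph{second} slot by re-assembling $\sum_i(x,x_i)x_i$ into the single Lie algebra element $x$ acting on $m$, rather than by producing a scalar multiple of $a\otimes m$. This is exactly what makes the result a genuine strengthening of Lemma~\ref{cas} and precisely the form needed later for the locality condition (\ref{loc}).
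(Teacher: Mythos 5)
Your proof is correct and follows exactly the paper's own argument: expand $\Omega=\sum_i x_i\otimes x_i$ in an orthonormal basis, use the eigenvector hypothesis on $a$ to extract the scalars $(x,x_i)$, and reassemble $\sum_i(x,x_i)x_i=x$ acting on $m$. (Your write-up even fixes a small typo in the paper's displayed computation, where $x_i(a)$ appears in place of $x_i(m)$ in the middle expression.)
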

\begin{proof}
Writing $\Omega = \sum_i x_i\otimes x_i$ in the orthonormal basis for the form $(x_i,x_j)=\delta_{i,j}$ we get
$$\Omega(a\otimes m) = \sum_i x_i(a)\otimes x_i(m) =a\otimes(\sum_i(x,x_i)x_i(a)) = a\otimes x(m).$$
\end{proof}

Let $\gl\subset\h$ and $\alpha$ be as in theorem \ref{coa}.  
\begin{lem}\label{lmd}
A module $M$ over the commutative algebra $k[\gl,\alpha]$ in the braided category $\cC(\h,\Omega)$ is local if and only if $M$ is semisimple as an $\gl$-module: 
\begin{equation}\label{lm}
M = \bigoplus_{\chi\in\gl^\vee}M_\chi,\quad M_\chi = \{m\in M|\ x(m) = \chi(x)m,\ \forall x\in\gl\}
\end{equation}
and for $M_\chi\not=0$ the character $\chi\se\gl\to k$ has integer values: $\chi(\gl)\subset\Z$. 
\newline
The algebra action permutes the components as follows:
$$e_xM_\chi = M_{\chi+(x,\da)}.$$
\end{lem}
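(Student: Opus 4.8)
The plan is to unpack the locality condition (\ref{loc}) into a concrete statement about the $\h$-action on $M$, then read off the eigenspace description. First I would evaluate the left-hand side of (\ref{loc}) on $a=e_x$ (for $x\in\gl$) and an arbitrary $m\in M$. By (\ref{ac}) the element $e_x$ has weight $x$, so Lemma~\ref{casm} gives $\Omega(e_x\otimes m)=e_x\otimes x(m)$; since the weight of $e_x$ is unchanged, iterating yields $\Omega^n(e_x\otimes m)=e_x\otimes x^n(m)$. Because $M$ is locally finite, $x$ acts on the finite-dimensional space $U(\h)m$ (Lemma~\ref{lf}), so the exponential is well defined and
$$e^{2\pi i\Omega}(e_x\otimes m) = e_x\otimes e^{2\pi i x}(m),$$
where $e^{2\pi i x}$ denotes the action of $\exp(2\pi i x)$ through the $\h$-action. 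Applying the module action $\mu$, the locality equation (\ref{loc}) becomes $e_x\cdot e^{2\pi i x}(m)=e_x\cdot m$ for all $x\in\gl$, $m\in M$.

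Next I would remove the factor $e_x$. By the Proposition each $e_x$ is invertible in $A$ (with $e_xe_{-x}$ a nonzero scalar multiple of the unit), so, using the module axiom $(ab)m=a(bm)$, the action of $e_x$ on $M$ is a bijection whose inverse is a scalar multiple of the action of $e_{-x}$. Cancelling $e_x$ therefore shows that locality is equivalent to
$$e^{2\pi i x} = \mathrm{id}_M \quad \text{for all } x\in\gl.$$

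The heart of the argument is to translate this into the decomposition (\ref{lm}). Fixing $x\in\gl$ and restricting to a finite-dimensional cyclic submodule $U(\h)m$ (Lemma~\ref{lf}), the action of $x$ has a Jordan decomposition into commuting semisimple and nilpotent parts $x_s,x_n$, and $e^{2\pi ix}=e^{2\pi ix_s}e^{2\pi ix_n}$ is the multiplicative Jordan decomposition of $e^{2\pi ix}$. Hence $e^{2\pi ix}=\mathrm{id}$ forces both $e^{2\pi ix_s}=\mathrm{id}$, so every eigenvalue $\lambda$ of $x_s$ satisfies $e^{2\pi i\lambda}=1$, i.e.\ $\lambda\in\Z$, and $e^{2\pi ix_n}=\mathrm{id}$; since $\exp$ is a bijection from nilpotent onto unipotent operators, the latter gives $x_n=0$. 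Thus each $x\in\gl$ acts semisimply with integer spectrum, and as $\gl$ is abelian the commuting family is simultaneously diagonalisable, giving $M=\bigoplus_\chi M_\chi$ with $\chi(\gl)\subset\Z$. The converse is immediate: for such a decomposition and $m\in M_\chi$ one has $e^{2\pi ix}(m)=e^{2\pi i\chi(x)}m=m$, so $e^{2\pi ix}=\mathrm{id}_M$ and $M$ is local. Finally, for the grading statement I would use that the action map $A\otimes M\to M$ is a morphism in $\cC(\h,\Omega)$, hence $\h$-equivariant: for $y\in\gl$ and $m\in M_\chi$ this gives $y(e_xm)=(x,y)e_xm+e_x\cdot y(m)=(\chi(y)+(x,y))e_xm$, so $e_xM_\chi\subset M_{\chi+(x,\da)}$, and invertibility of $e_x$ (applying $e_{-x}$) yields the reverse inclusion, hence equality.

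I expect the main obstacle to be the nilpotent part of the third step: rigorously justifying that $e^{2\pi ix_n}=\mathrm{id}$ forces $x_n=0$ (injectivity of $\exp$ on nilpotents, via uniqueness of the unipotent logarithm), and, relatedly, keeping all exponentials well defined by systematically reducing to the finite-dimensional cyclic submodules $U(\h)m$ furnished by Lemma~\ref{lf} before invoking the Jordan decomposition.
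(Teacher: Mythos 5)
Your proof is correct and follows essentially the same route as the paper's: substitute $a=e_x$ into the locality condition, use Lemma~\ref{casm} and invertibility of $e_x$ to reduce locality to $e^{2\pi i x}=\mathrm{id}_M$ for all $x\in\gl$, and then derive the grading from $\h$-equivariance of the action map. The only difference is that you carefully spell out, via the Jordan decomposition on finite-dimensional cyclic submodules, why $e^{2\pi i x}=\mathrm{id}$ forces semisimplicity with integer spectrum --- a step the paper simply asserts --- so your write-up is a legitimate filling-in of the same argument.
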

\begin{proof}
Putting $a=e_x$ in the equation (\ref{loc}) and using lemma \ref{casm} we get
$e_xm = e_xe^{2\pi i x}(m),$ which means that for $x\in\gl$ the operator $e^{2\pi i x}$ is the identity on $M$. Thus $M$ is semisimple as a $\gl$-module with the characters as in (\ref{lm}). 

For $m\in M_\chi$ and $y\in\gl$ we have
$$y(e_xm) = y(e_x)m + e_xy(m) = (x,y)e_xm + \chi(y)e_xm.$$
\end{proof}
We define the {\em support} of a local $k[\gl,\alpha]$-module $M$ as
$$supp(M) = \{ \chi\in Hom(\gl,\Z)|\ M_\chi\not= 0\}.$$ By lemma \ref{lmd} the support of a local $A$-module is a union of $\gl$-orbits in $Hom(\gl,\Z)$ with respect to $\gl$-action on $Hom(\gl,\Z)$ given by the homomorphism
$$\gl\to Hom(\gl,\Z),\quad x\mapsto (x,\da).$$
Now we are going to present the group of $\gl$-orbits in $Hom(\gl,\Z)$ in a slightly dif and only iferent way. Define
$$\gl^\di = \{ x\in\h |\ (x,\gl)\subset \Z\}.$$
The commutative diagram
$$\xymatrix{
\h \ar@{=}[r] & \h^* \ar@{->>}[r] & Hom(\gl,k)\\
\gl^\di \ar@^{(->}[u] \ar[rr] && Hom(\gl,\Z) \ar@^{(->}[u]
}$$
implies that the homomorphism $\gl^\di\to Hom(\gl,\Z)$ is surjective. Clearly its kernel is $\gl^\ort = \{ x\in\h |\ (x,\gl)=0\}$. Thus $Hom(\gl,\Z)$ can be identified with $\gl^\di/\gl^\ort$. If the form has integer values on $\gl$, the group $\gl$ embeds in $\gl^\di$ and the cokernel of the map $\gl\to Hom(\gl,\Z)$ is $ \gl^\di/(\gl+\gl^\ort)$.

\begin{prop}\label{grad} 
Let $A$ be $k[\gl,\alpha]$. 
The category ${_{A}}{\cC(\h,\Omega)^{loc}}$ has a monoidal grading by the group $\gl^\di/(\gl+\gl^\ort)$ :
\begin{equation}\label{cgr}
{_{A}}{\cC(\h,\Omega)^{loc}} = \bigoplus_{X\in \gl^\di/(\gl+\gl^\ort)}{_{A}}{\cC(\h,\Omega)_X^{loc}},
\end{equation}
where the category ${_{A}}{\cC(\h,\Omega)_X^{loc}}$ is the full subcategory of ${_{A}}{\cC(\h,\Omega)^{loc}}$, consisting of modules $M$ with support $X$ (or rather the corresponding $\gl$-orbit in $Hom(\gl,\Z)$).
\end{prop}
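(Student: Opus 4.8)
The plan is to build the decomposition objectwise out of the $\gl$-eigenspace decomposition supplied by Lemma \ref{lmd}, and then to check that it is respected by morphisms and by the tensor product over $A$. For each coset $X\in\gl^\di/(\gl+\gl^\ort)$ I would fix the corresponding $\gl$-orbit $O_X\subset\Hom(\gl,\Z)$, using the identification $\Hom(\gl,\Z)=\gl^\di/\gl^\ort$ recalled before the statement: the $\gl$-orbits are exactly the fibres of the quotient $\gl^\di/\gl^\ort\to\gl^\di/(\gl+\gl^\ort)$, i.e.\ the cosets of the subgroup $\mathrm{im}(\gl\to\Hom(\gl,\Z))$, so they are indexed bijectively by the $X$. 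For a local module $M$ I set $M^{(X)}=\bigoplus_{\chi\in O_X}M_\chi$. Three local observations make this a decomposition inside ${_{A}}{\cC(\h,\Omega)^{loc}}$: (i) since $\h$ is abelian, each $y\in\h$ commutes with every $x\in\gl$ as operators and so preserves the $\gl$-eigenspaces $M_\chi$, whence $M^{(X)}$ is an $\h$-subobject; (ii) by the last displayed formula of Lemma \ref{lmd} the operator $e_x$ carries $M_\chi$ into $M_{\chi+(x,\da)}$, which lies in the same orbit, so $M^{(X)}$ is an $A$-submodule; (iii) the locality condition (\ref{loc}) is imposed elementwise, hence passes to the $A$-submodule $M^{(X)}$. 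Thus $M=\bigoplus_X M^{(X)}$ with $M^{(X)}\in{_{A}}{\cC(\h,\Omega)_X^{loc}}$.

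For the morphism side I would note that any $A$-module map $f\se M\to N$ is in particular $\gl$-linear, so $f(M_\chi)\subseteq N_\chi$ and therefore $f(M^{(X)})\subseteq N^{(X)}$. Consequently $\Hom(M,N)$ splits as $\bigoplus_X\Hom(M^{(X)},N^{(X)})$ and there are no nonzero morphisms between modules supported on distinct orbits. Combined with the objectwise splitting, this is exactly the direct-sum decomposition of categories (\ref{cgr}).

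It then remains to verify that the grading is monoidal. The first step is the $\gl$-graded analogue of Lemma \ref{gi}: for $m\in M_\psi$, $n\in N_\phi$ and $x\in\gl$ one computes $x(m\otimes n)=(\psi+\phi)(x)(m\otimes n)$, giving $(M\otimes N)_\chi=\bigoplus_{\psi+\phi=\chi}M_\psi\otimes N_\phi$ and hence $supp(M\otimes N)=supp(M)+supp(N)$ as a sum of subsets of $\Hom(\gl,\Z)$. If $supp(M)\subseteq O_X$ and $supp(N)\subseteq O_Y$, then since $O_X,O_Y$ are cosets of the subgroup $\mathrm{im}(\gl\to\Hom(\gl,\Z))$ their sum is the single coset $O_{X+Y}$. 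Because $M\otimes_A N$ is a quotient of $M\otimes N$ via the coequaliser defining $\otimes_A$, its support is contained in $O_{X+Y}$, so $M\otimes_A N\in{_{A}}{\cC(\h,\Omega)_{X+Y}^{loc}}$; and the unit object $A=k[\gl,\alpha]$ has $supp(A)=\mathrm{im}(\gl)=O_0$ by the action formula (\ref{ac}), so it lies in the identity component.

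The routine parts are the objectwise and $\Hom$-splittings, which are immediate once one checks that $\h$-stability and locality descend to the summands $M^{(X)}$. The part carrying the real content is the monoidal compatibility, and within it the one genuinely substantive computation is the strictified tensor formula $(M\otimes N)_\chi=\bigoplus_{\psi+\phi=\chi}M_\psi\otimes N_\phi$; once this is in hand the statement that the sum of two $\mathrm{im}(\gl)$-cosets is again a single coset is pure coset arithmetic, and this is precisely where the group structure on $\gl^\di/(\gl+\gl^\ort)$ enters. The only point demanding care is bookkeeping the two identifications (orbits versus cosets, and support in $\Hom(\gl,\Z)$ versus $\gl$-character) consistently throughout.
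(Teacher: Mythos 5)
Your proof is correct and follows essentially the same route as the paper: decompose each local module according to the $\gl$-orbits of its support and verify monoidality via the eigenspace inclusion $M_\chi\otimes N_\xi\subset (M\otimes N)_{\chi+\xi}$. Your direct construction of the summands $M^{(X)}$ (rather than the paper's appeal to supports of indecomposables) and your explicit check of the $\Hom$-splitting are harmless elaborations of the same argument.
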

\begin{proof}
Clearly the support of an indecomposable local $k[\gl,\alpha]$-module is an orbit, which provides the decomposition (\ref{cgr}). 
To establish monoidality of the grading we need to show that the tensor product $M\otimes_AN$ of local modules $M\in {_{A}}{\cC(\h,\Omega)_X^{loc}}$, $N\in {_{A}}{\cC(\h,\Omega)_Y^{loc}}$ belongs to ${_{A}}{\cC(\h,\Omega)_{X+Y}^{loc}}$. This follows from the following inclusion of eigenspaces $M_\chi\otimes N_\xi\subset (M\otimes N)_{\chi+\xi}$. 
\end{proof}

Note that the kernel of the restriction $(\da, \da)|_{\gl^\ort}$ coincides with $\gl^\ort\cap\gl^{\ort\ort}$. Note also that 
the double orthogonal $\gl^{\ort\ort}$ coincides with the vector subspace of $\h$ generated by $\gl$. Thus the induced form on $\overline\gl = \gl^\ort/(\gl^\ort\cap\gl^{\ort\ort})$ is non-degenerate. Let $\overline\Omega$ be the Casimir 
of this form on $\overline\gl$. 

Now we are ready to characterise the degree 0 component of the category of local modules. 
\begin{prop}\label{dz}
The category ${_{A}}{\cC(\h,\Omega)_0^{loc}}$ is equivalent, as a braided monoidal category, to the Drinfeld category $\cC(\overline\gl,\overline\Omega)$.
\end{prop}
\begin{proof}
We will prove the proposition by constructing two braided equivalences:
$${_{A}}{\cC(\h,\Omega)_0^{loc}}\to{_{A_0}}{\cC(\h/\gl^{\ort\ort},\Omega')}\to \cC(\overline\gl,\overline\Omega).$$
First we will prove that the category ${_{A}}{\cC(\h,\Omega)_0^{loc}}$ is equivalent, as a braided monoidal category, to the category ${_{A_0}}{\cC(\h/\gl^{\ort\ort},\Omega')}$ of modules over the group algebra $A_0=k[\gl\cap\gl^\ort]$ in the Drinfeld category $\cC(\h/\gl^{\ort\ort},\Omega')$. Here $\Omega' = (f\otimes f)(\Omega) $ is the image of the Casimir $\Omega\in\h^{\otimes 2}$ under the epimorphism $f:\h\to\h/\gl^{\ort\ort}$.
  
By the definition of the subcategory ${_{A}}{\cC(\h,\Omega)_0^{loc}}$ a local $A$-module $M$ belongs to ${_{A}}{\cC(\h,\Omega)_0^{loc}}$ if and only if its eigenspace decomposition, as an $\gl$-module, has a form 
\begin{equation}\label{tdd}
M = \bigoplus_{x\in\gl^\ort}M_x,\quad M_x=\{m\in M|\ y(m)=(x,y)m\quad \forall y\in\gl\}.
\end{equation}
Note that the $A$-action amounts to: $e_xM_y = M_{x+y}$. 

Define a functor ${_{A}}{\cC(\h,\Omega)_0^{loc}}\to{_{A_0}}{\cC(\h/\gl^{\ort\ort},\Omega')}$ by an assignment $M\mapsto M_0$, where $M_0 = \{m\in M|\ z(m) = 0\ \forall z\in \gl\} = M^\gl$ is the space of $\gl$-invariants (which coincides with the space of $\gl^{\ort\ort}$-invariants). The $A$-action on $M$ gives a $A_0$-action on $M_0$. Note that $A_0$ coincides with the subalgebra of $A$, spanned by $e_t$ with $t\in \gl\cap\gl^\ort$. Since the form is zero on $\gl\cap\gl^\ort$, $A_0$ is isomorphic to the (untwisted) group algebra $k[\gl\cap\gl^\ort]$. 

A quasi-inverse functor ${_{A_0}}{\cC(\h/\gl^{\ort\ort},\Omega')}\to {_{A}}{\cC(\h,\Omega)_0^{loc}}$ sends $N$ into $A\otimes_{A_0} N$ with the $\h$-action:
$z(a\otimes n) = z(a)\otimes n + a\otimes z(n)$ and the $A$-action: $a(b\otimes n) = ab\otimes n$. The $A$-module $A\otimes_{A_0} N$ is clearly local, since the $\gl$-action on $N$ is trivial. The monoidal structure of this functor is given by
$$(A\otimes_{A_0} N)\otimes_A(A\otimes_{A_0} L)\to A\otimes_{A_0}(N\otimes L),\quad (a\otimes n)\otimes_A(b\otimes l)\mapsto ab\otimes n\otimes l.$$
\newline
Since $N^\gl=0$ and $A^\gl=k$, the natural inclusion $N\to (A\otimes_{A_0}N)^\gl$ (induced by the unit element of $A$) is an isomorphism. 
The map $A\otimes_{A_0} M^\gl\to M$ (induced by the $A$-action) is an isomorphism because of the special shapes of the eigenspace decomposition (\ref{tdd}) and the $A$-action. Indeed it follows from the comparison of decompositions for $A\otimes_{A_0} M^\gl$ and $M$:
$$(A\otimes_{A_0} M^\gl)_x = e_x\otimes M^\gl \to e_xM^\gl = M_x.$$
The fact, that the functor ${_{A}}{\cC(\h,\Omega)_0^{loc}}\to{_{A_0}}{\cC(\h/\gl^{\ort\ort},\Omega')}$ is braided, can be checked directly. Indeed it transforms the braiding $c_{M,N}(m\otimes m) = e^{\pi i\Omega}(n\otimes m)$ in ${_{A}}{\cC(\h,\Omega)_0^{loc}}$ into 
$$\overline c_{M^\gl,N^\gl}(m\otimes n) = e^{\pi i(f\otimes f)(\Omega)}(n\otimes m),$$ where as before $f\se\h\to\h/\gl^{\ort\ort}$ is the quotient map. 

Now we construct a functor 
\begin{equation}\label{equ}
{_{A_0}}{\cC(\h/\gl^{\ort\ort},\Omega')}\to \cC(\overline\gl,\overline\Omega)
\end{equation}
by sending a $k[\gl\cap\gl^\ort]$-module $N$ into the space $N_{\gl\cap\gl^\ort} = N/\sum_{t\in\gl\cap\gl^\ort}(1-e_t)(N)$ of coinvariants. Since $\gl^\ort$ acts trivially on $A_0$, the $\gl^\ort/(\gl^\ort\cap\gl^{\ort\ort})$-action on $N$ descents to $N_{\gl\cap\gl^\ort}$. The natural map $(N\otimes_{A_0}P)_{\gl\cap\gl^{\ort}}\to N_{\gl\cap\gl^{\ort}}\otimes P_{\gl\cap\gl^{\ort}}$ defines a monoidal structure.

Rather than constructing a quasi-inverse to the functor (\ref{equ}) (which would involve a delicate choice of sections for certain maps) we will prove that this functor is fully faithful and surjective on objects. To prove that the functor is surjective on objects, choose a section $\sigma\se\h/\gl^{\ort\ort}\to\overline\gl$ of the embedding of abelian Lie algebras $\overline\gl=(\gl^{\ort\ort}+\gl^\ort)/\gl^{\ort\ort}\to\h/\gl^{\ort\ort}$. This will allow us to extend an $\overline\gl$-module structure on $Q$ to a $\h/\gl^{\ort\ort}$-module structure. Then the free $A_0$-module $A_0\otimes Q$ (with the diagonal $\h/\gl^{\ort\ort}$-action) will have a property $(A_0\otimes Q)_{\gl\cap\gl^\ort} = Q$. This establishes surjectivity on objects as well as fullness. 
To show that the functor (\ref{equ}) is faithful we need to prove that if for a $A_0$- and $\h/\gl^{\ort\ort}$-linear map $f:N\to P$ between $A_0$-modules in $\cC(\h/\gl^{\ort\ort},\Omega')$ 
the induced map of coinvariants $f_{\gl\cap\gl^\ort}\se N_{\gl\cap\gl^\ort}\to P_{\gl\cap\gl^\ort}$ is zero then $f$ is zero. In other words if the image of $f$ is in $\sum_{t\in\gl\cap\gl^\ort}(1-e_t)(P)$ then $f$ is zero. The last follows from lemma \ref{sub}, since $im(f)$ is an $\h$-module.
\end{proof}

\begin{lem}\label{sub}
Let $P$ be an $A_0$-modules in $\cC(\h/\gl^{\ort\ort},\Omega')$. Then the only $\h$-module inside $\sum_{t\in\gl\cap\gl^\ort}(1-e_t)(P)$ is the zero subspace.
\end{lem}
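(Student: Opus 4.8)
The plan is to reduce the statement to the purely combinatorial fact that the augmentation ideal of $A_0$ ``avoids'' every weight space of $P$. The first thing I would record is how $A_0$ acts on weights. Since $A_0=k[\gl\cap\gl^\ort]$ sits inside $\cC(\h/\gl^{\ort\ort},\Omega')$ with $e_t$ spanning the weight space of the character $\chi_t\se\h/\gl^{\ort\ort}\to k$, $\chi_t(f(y))=(t,y)$ (well defined because $(t,\gl^{\ort\ort})=0$), and since the action map $A_0\otimes P\to P$ is a morphism, each $e_t$ shifts weights by $\chi_t$: writing $P=\bigoplus_\chi P_\chi$ for the generalised weight decomposition of the locally finite $\h/\gl^{\ort\ort}$-module $P$, one has $e_tP_\chi\subseteq P_{\chi+\chi_t}$, and $e_t$ is invertible with inverse $e_{-t}$. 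Non-degeneracy of the form on $\h$ makes $t\mapsto\chi_t$ injective, so $\Lambda=\{\chi_t\mid t\in\gl\cap\gl^\ort\}$ is a group of characters isomorphic to $\gl\cap\gl^\ort$. The crucial (and easy to overlook) point is that the $\chi_t$ are \emph{nonzero}, so $e_t$ genuinely moves weights and $W=\sum_t(1-e_t)(P)$ is not itself an $\h$-submodule; this is exactly what gives the lemma content.

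Next I would reduce to a single weight. Because each $1-e_t$ preserves the coarser decomposition $P=\bigoplus_{C}P^{(C)}$ indexed by the $\Lambda$-cosets $C$ of characters (as $\chi_t\in\Lambda$), the subspace $W$ decomposes accordingly, $W=\bigoplus_C W^{(C)}$ with $W^{(C)}=\sum_t(1-e_t)(P^{(C)})$. Any $\h$-submodule $Q\subseteq P$ is generalised-weight-graded, $Q=\bigoplus_\chi(Q\cap P_\chi)$ (a submodule of a locally finite module over the abelian $\h/\gl^{\ort\ort}$ is again locally finite, and its weight components are cut out by the eigenprojectors). Hence it suffices to prove $W\cap P_\chi=0$ for every character $\chi$: then any $\h$-submodule $Q\subseteq W$ satisfies $Q\cap P_\chi\subseteq W\cap P_\chi=0$ for all $\chi$, forcing $Q=0$.

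To prove $W\cap P_\chi=0$ I would work inside the single coset $C=[\chi]$ and build an explicit retraction. Fix a base character $\chi_0\in C$ and set $V=P_{\chi_0}$; since each $e_t\se P_{\chi_0}\to P_{\chi_0+\chi_t}$ is an isomorphism, $P^{(C)}$ is a free $A_0$-module with fibre $V$, and $W^{(C)}=I\cdot P^{(C)}$ for $I=\ker(\epsilon\se A_0\to k)$ the augmentation ideal. Define $\sigma\se P^{(C)}\to V$ by $\sigma(p)=\sum_t e_{-t}\,p_{\chi_0+\chi_t}$, where $p=\sum_t p_{\chi_0+\chi_t}$ is the (finite) weight decomposition. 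A short telescoping computation gives $\sigma(e_tq)=\sigma(q)$, so $\sigma$ annihilates every $(1-e_t)(q)$ and hence $\sigma(W^{(C)})=0$; on the other hand $\sigma$ restricted to the weight space $P_{\chi_0+\chi_s}$ equals $e_{-s}$, which is invertible, so $\sigma$ is injective on each weight space. Consequently $W\cap P_\chi=W^{(C)}\cap P_\chi\subseteq\ker(\sigma|_{P_\chi})=0$, as required.

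I expect the main obstacle to be conceptual rather than computational: one must resist the (false) intuition that $e_t$ is invariant (it carries the nonzero weight $\chi_t$), and the real work is then packaged into the retraction $\sigma$, whose only subtleties are the identity $\sigma(e_tq)=\sigma(q)$ and the finiteness of the sums involved, both immediate from local finiteness. The coset reduction and the weight-grading of submodules are routine once the weight-shifting behaviour of $A_0$ is in place.
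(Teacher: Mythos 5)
Your proof is correct, but it takes a genuinely different route from the paper's. The paper argues by contradiction directly at the level of the $\h$-action: for an element $\sum_t(1-e_t)(p_t)$ of a nonzero submodule $J\subseteq\sum_t(1-e_t)(P)$, chosen so that some $p_t\notin\sum_t(1-e_t)(P)$, it uses the derivation identity
$$z\Bigl(\sum_t(1-e_t)(p_t)\Bigr)-\sum_t(1-e_t)(z(p_t))=\sum_t(t,z)e_t(p_t)$$
to place the right-hand side in $J$ for every $z\in\h$, and then recovers the individual $e_t(p_t)\in J$ ``by varying $z$ and solving a linear system'', whence $p_t=e_{-t}e_t(p_t)$ lies in $\sum_t(1-e_t)(P)$ --- a contradiction. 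You instead decompose $P$ into generalised weight spaces, note that $e_t$ shifts weights by the nonzero character $(t,\da)$, reduce to a single $\Lambda$-coset, and construct the explicit retraction $\sigma$ onto a fixed weight space that kills $\sum_t(1-e_t)(P)$ but is injective on each weight space. This proves the strictly stronger statement that $\sum_t(1-e_t)(P)$ contains no nonzero generalised weight vector, from which the lemma follows because any nonzero $\h$-submodule of a locally finite module contains one. Your route costs more setup but buys robustness: it avoids the choice of representatives $p_t$ outside the augmentation submodule and the linear-system step, which is the delicate point of the paper's argument (the functionals $(t,\da)$ for the finitely many $t$ occurring need not be linearly independent --- consider $t$ and $2t$ --- so extracting each $e_t(p_t)$ separately requires more care than the paper lets on). The paper's proof, in exchange, is shorter and uses nothing beyond the derivation property of the $\h$-action.
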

\begin{proof}
Let $J\subset \sum_{t\in\gl\cap\gl^\ort}(1-e_t)(P)$ be an $\h$-module. Assume that $J$ is non zero and write an element of $J$ as $\sum_{t\in\gl\cap\gl^\ort}(1-e_t)(p_t)$ with (at least one of) $p_t\not\in\sum_{t\in\gl\cap\gl^\ort}(1-e_t)(P)$. The identity (with $z\in\h$)
$$z(\sum_{t\in\gl\cap\gl^\ort}(1-e_t)(p_t)) - \sum_{t\in\gl\cap\gl^\ort}(1-e_t)(z(p_t)) = \sum_{t\in\gl\cap\gl^\ort}(t,z)e_t(p_t)$$ shows that the right hand side is in $J$. By varying $z$ (and solving a linear system) we can see that $e_t(p_t)$ belongs to $J$ for all $t$, which is in contradiction with our assumption. Hence $J=0$.
\end{proof}

Define a bimultiplicative function $c\se\gl^\di\times\gl^\di\to k^\cro$ by $c(x,y) = e^{\pi i(x,y)}$. Let $\cG(\gl^\di)$  be the category of finite dimensional $\gl^\di$-graded vector spaces. Define a brading on $\cG(\gl^\di)$ by means of $c$ and denote this braided monoidal category by $\cG(\gl^\di,c)$   (cf. Appendix). 

\begin{prop}\label{fun}
There is a braided monoidal functor $\cG(\gl^\di,c)\to{_{A}}{\cC(\h,\Omega)^{loc}}$ such that the image of $[x]$ belongs to 
${_{A}}{\cC(\h,\Omega)_X^{loc}}$, where $X$ is the coset of $x$. 
\end{prop}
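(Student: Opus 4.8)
The plan is to realise each object $[x]$ as an \emph{invertible} local module and then extend additively. For $x\in\gl^\di$ let $A_x=A\otimes k_x$ be the free rank-one left $A$-module on a generator $u_x$ carrying $\h$-weight $x$; explicitly $A_x=\bigoplus_{t\in\gl}k\,e_tu_x$ with $z(e_tu_x)=(t+x,z)e_tu_x$ and $A$ acting by left multiplication. This is locally finite, being a direct sum of one-dimensional $\h$-eigenspaces. I would check locality straight from (\ref{loc}): for $a=e_g$, $g\in\gl$, Lemma \ref{casm} gives $\Omega(e_g\otimes e_tu_x)=(g,t+x)\,e_g\otimes e_tu_x$, and since $(g,t)\in\Z$ (the form is integral on $\gl$) and $(g,x)\in\Z$ (as $x\in\gl^\di$), the factor $e^{2\pi i(g,t+x)}$ equals $1$; hence $\mu(e^{2\pi i\Omega}(e_g\otimes m))=e_gm$. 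The weights of $A_x$ form the coset $x+\gl$, so by Lemma \ref{lmd} its support is the $\gl$-orbit of $(x,\da)|_\gl$ in $Hom(\gl,\Z)$, i.e. $A_x\in{_{A}}{\cC(\h,\Omega)_X^{loc}}$ with $X$ the class of $x$ in $\gl^\di/(\gl+\gl^\ort)$. On a general graded space $V=\bigoplus_xV_x$ I then set $F(V)=A\otimes V=\bigoplus_xA_x\otimes V_x$ (left $A$-action on the first factor, diagonal $\h$-action using the grading of $V$), with $F(f)=1\otimes f$ on degree-preserving maps.

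Next I would equip $F$ with a monoidal structure $J_{V,W}\se F(V)\otimes_A F(W)\to F(V\otimes W)$, $(a\otimes v)\otimes_A(b\otimes w)\mapsto ab\otimes v\otimes w$; on the invertible pieces this is the identification $A_x\otimes_A A_y\xrightarrow{\sim}A_{x+y}$, $u_x\otimes_A u_y\mapsto u_{x+y}$, and $F$ carries the unit of $\cG(\gl^\di,c)$ (concentrated in degree $0$) to $A_0=A$, the unit of ${_{A}}{\cC(\h,\Omega)^{loc}}$. Compatibility of $J$ with the associativity constraints is painless here: for abelian $\h$ the elements $\Omega_{12},\Omega_{23}$ commute in $U(\h)^{\otimes 3}$, so the Drinfeld associator is trivial and $\cC(\h,\Omega)$ carries the ordinary associator, while the associator of $\cG(\gl^\di,c)$ is trivial since $c$ is bimultiplicative; associativity of $J$ then reduces to associativity of multiplication in $A$ together with that of $\otimes_k$.

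Finally I would verify that $F$ is braided. The braiding of ${_{A}}{\cC(\h,\Omega)^{loc}}$ is induced by $c_{M,N}=e^{\pi i\Omega}\circ(\text{swap})$, so on the generating weight vectors the induced map sends $u_x\otimes_A u_y$ to $e^{\pi i\Omega}(u_y\otimes u_x)=e^{\pi i(x,y)}(u_y\otimes u_x)$ by the computation of Lemma \ref{cas}; passing through $J$ and the identifications $u_x\otimes_A u_y\leftrightarrow u_{x+y}\leftrightarrow u_y\otimes_A u_x$ this is multiplication by $c(x,y)=e^{\pi i(x,y)}$, which is exactly the braiding $[x]\otimes[y]\to[y]\otimes[x]$ in $\cG(\gl^\di,c)$.

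I expect the only real work to be the well-definedness of $J_{V,W}$ as a morphism out of the coequaliser defining $\otimes_A$: one must check that $J$ equalises the two structure maps $(\nu_M1)(c_{M,A}1)$ and $1\nu_N$, which brings in the braiding $c_{F(V),A}$ and hence factors $e^{\pi i\Omega}$, and this is precisely where the commutativity relation (\ref{com}) for $A$ and the weight bookkeeping of Lemma \ref{casm} must be combined. Once $J$ is well defined, its $A$- and $\h$-linearity, the coherence with the associators, and the braided-functor identity are all routine bilinear verifications on the weight vectors $e_tu_x$.
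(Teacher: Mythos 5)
Your modules are the same as the paper's: the free module $A\otimes k_x$ with basis $e_tu_x$ and diagonal $\h$-action is exactly the module the paper calls $A(x)$ (with $e^x_t=e_tu_x$, so that $e_ue^x_t=\alpha(u,t)e^x_{u+t}$), and your locality check via Lemma \ref{casm} and integrality of $(g,t+x)$ is correct. The gap is in the monoidal structure. The map $(a\otimes v)\otimes_A(b\otimes w)\mapsto ab\otimes v\otimes w$, i.e.\ $e_uu_x\otimes_Ae_vu_y\mapsto\alpha(u,v)e_{u+v}u_{x+y}$, does \emph{not} descend to the coequaliser defining $\otimes_A$. The coequaliser imposes the relation $e_we_uu_x\otimes e_vu_y=e^{\pi i(w,x+u)}\bigl(e_uu_x\otimes e_we_vu_y\bigr)$, and applying your formula to the two sides (using the $2$-cocycle identity and $\alpha(w,u)\alpha(u,w)^{-1}=e^{\pi i(w,u)}$) leaves a discrepancy of $e^{\pi i(w,x)}$. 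For $x\in\gl^\di\setminus\gl$ this is only a sign, not $1$: take $\gl=\Z\subset\h=k$ with $(1,1)=2$, so $\gl^\di=\tfrac12\Z$, and $x=\tfrac12$, $w=1$ gives $e^{\pi i(w,x)}=-1$. You flag exactly this well-definedness as ``the only real work'' and predict it is routine; for the map as written it in fact fails. The paper's structure map carries an extra twist,
$$\phi_{x,y}(e^x_u\otimes e^y_v)=\alpha(u,v)\,e^{-\pi i(x,v)}\,e^{x+y}_{u+v},$$
and it is precisely the factor $e^{-\pi i(x,v)}$, together with $e^{2\pi i(u,w)}=1$ for $u,w\in\gl$, that makes the two sides of the coequaliser relation agree. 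With the corrected map the rest of your outline (triviality of the associator for abelian $\h$, the braiding acting by $c(x,y)=e^{\pi i(x,y)}$ on generators) does go through, but the associativity and braiding squares must then be re-verified with the twist present --- these nontrivial cancellations are what occupy most of the paper's proof.
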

\begin{proof}
For $x\in\h$ define an $\h$-module $A(x)$ as a span of $\{e^x_y,\ y\in \gl\}$ with the $\h$-action: 
$$z(e^x_y) = (z,x+y)e^x_y.$$ 
Define an $A$-action on $A(x)$ by 
$e_ue^x_y = \alpha(u,y)e^x_{u+y}$, which turns $A(x)$ into a $A$-module in $\cC(\h,\Omega)$. Indeed, 
$$z(e_u)e^x_y + e_uz(e^x_y) = (u,z)e_ue^x_y + (x+y,z)e_ue^x_y$$
coincides with
$$z(e_ue^x_y) = \alpha(u,y)z(e^u_{u+y}) = \alpha(u,y)(u+x+y,z)e^x_{u+y}$$
Clearly the module $A(x)$ has the following eigenspace decomposition $A(x) = \oplus_{y\in\gl}A(x)_{(x+y,\da)}$. 
In particular, for $x\in\gl^\di$ the $A$-module $A(x)$ is local and belongs to the subcategory ${_{A}}{\cC(\h,\Omega)^{loc}_X}$ where $X$ is the $\gl+\gl^\ort$-coset of $x$. 
\newline
Now we need to calculate tensor products $A(x)\otimes_AA(y)$ for $x,y\in\h$. The map
$$\phi_{x,y}:A(x)\otimes A(y)\to A(x+y),\quad e^x_u\otimes e^y_v\mapsto \alpha(u,v)e^{-\pi i(x,v)}e^{x+y}_{u+v}$$
is obviously  $\h$-linear. It has a property 
$$\phi_{x,y}(e_we^x_u\otimes e^y_v) = \phi_{x,y}(e^{\pi i(w,x+u)}(e^x_u\otimes e_we^y_v)).$$ 
Indeed, by 2-cocycle property of $\alpha$ and since $e^{\pi i(u,w)} = \alpha(u,w)\alpha(w,u)^{-1}$
$$\phi_{x,y}(e_we^x_u\otimes e^y_v) = \alpha(w,u)\phi_{x,y}(e^x_{w+u}\otimes e^y_v) = \alpha(w,u)\alpha(w+u,v)e^{-\pi i(x,v)}e^{x+y}_{w+u+v}$$
coincides with
$$\phi_{x,y}(e^{\pi i(w,x+u)}(e^x_u\otimes e_we^y_v) = e^{\pi i(w,x+u)}\phi_{x,y}(e^x_u\otimes \alpha(w,v)e^y_{w+v}) = $$ $$\alpha(w,v)\alpha(u,w+v)e^{\pi i(w,x+u)}e^{-\pi i(x,w+v)}e^{x+y}_{w+u+v}.$$
Thus the map $\phi_{x,y}$ factors through the map $A(x)\otimes_AA(y)\to A(x+y)$. Using the relation $e_we^x_u\otimes e^y_v = e^{\pi i(w,x+u)}(e^x_u\otimes e_we^y_v)$, valid in $A(x)\otimes_AA(y)$, we can see that the map $\phi_{x,y}: A(x)\otimes_AA(y)\to A(x+y)$ is an isomorphism. For $x,y,z\in\h$ 
maps $\phi$ fit into a commutative diagram
$$\xymatrix{
A(x)\otimes_AA(y)\otimes_AA(z) \ar[r]^{\phi_{x,y}1} \ar[d]_{1\phi_{y,z}} & A(x+y)\otimes_AA(z) \ar[d]^{\phi_{x+y,z}} \\
A(x)\otimes_AA(y+z) \ar[r]^{\phi_{x,y+z}} & A(x+y+z)
}$$ Indeed, the top-right composition acts as
$$e^x_u\otimes e^y_v\otimes e^z_w\mapsto \alpha(u,v)e^{-\pi i(x,v)}e^{x+y}_{u+v}\otimes e^z_w\mapsto$$
$$\alpha(u,v)\alpha(u+v,w)e^{-\pi i((x,v)+(x+y,w))}e^{x+y+z}_{u+v+w},$$ while the left-bottom composite has a form
$$e^x_u\otimes e^y_v\otimes e^z_w\mapsto \alpha(v,w)e^{\pi i(y,w)}e^x_u\otimes e^{y+z}_{u+w} \mapsto$$
$$\alpha(v,w)\alpha(u,v+w)e^{\pi i((y,w)+(x,v+w))}e^{x+y+z}_{u+v+w}.$$ Hence we have a monoidal functor $\cG(\gl^\di)\to {_{A}}{\cC(\h,\Omega)^{loc}}$, which is in fact a braided functor.  Indeed, the diagram
$$\xymatrix{
A(x)\otimes_AA(y) \ar[r]^{\phi_{x,y}} \ar[d]_{c_{A(x),A(y)}} & A(x+y) \ar[d]^{c(x,y)1}\\
A(y)\otimes_AA(x) \ar[r]^{\phi_{y,x}} & A(x+y)
}$$ commutes:
\newline
the top-right composition acts as
$$e^x_u\otimes e^y_v\mapsto \alpha(u,v)e^{\pi i(x,v)}e^{x+y}_{u+v}\mapsto \alpha(u,v)e^{\pi i((x,v)+(x,y))}e^{x+y}_{u+v},$$
which coincides with the action of the left-bottom composite
$$e^x_u\otimes e^y_v\mapsto e^{\pi i(x+u,y+v)}e^y_v\otimes e^x_u\mapsto \alpha(v,u)e^{\pi i((x+u,y+v)+(y,v))}e^{x+y}_{u+v}.$$
This gives us the desired braided monoidal functor $[x]\mapsto A(x)$. 
\end{proof}

\begin{rem}\label{isomo}
\end{rem}
Note that for $x\in\gl$ the local $A$-module $A(x)$, defined in the proof of proposition \ref{fun}, is isomorphic to $A$. Indeed define a map $\psi_x\se A(x)\to A$ by $e^x_u\mapsto \alpha(u,x)e_{x+u}$. While $\h$-linearity of $\psi_x$ is obvious, $A$-linearity follows from 2-cocycle property of $\alpha$, which implies that
$$\psi_x(e_ve^x_u) = \psi_x(\alpha(v,u)e^x_{v+u}) = \alpha(v,u)\alpha(v+u,x)e_{x+v+u}$$ coincides with 
$$e_v\psi_x(e^x_u) = e_v\alpha(u,x)e_{x+u} = \alpha(u,x)\alpha(v,u+x)e_{x+v+u}.$$
Moreover, for $x,y\in\gl$ the diagram
$$\xymatrix{
A(x)\otimes_AA(y) \ar[r]^{\phi_{x,y}} \ar[d]_{\psi_x\psi_y} & A(x+y) \ar[d]^{\psi_{x+y}}\\
A\otimes_AA \ar[r]^\mu & A
}$$ 
commutes up to multiplication by $\alpha(x,y)$. Indeed the top-right composition has the effect
$$e^x_u\otimes_Ae^y_v\mapsto \alpha(u,v)e^{\pi i(x,v)}e^{x+y}_{u+v}\mapsto \alpha(u,v)e^{\pi i(x,v)}\alpha(u+v,x+y)e^{x+y+u+v},$$
while the left-bottom composition acts as
$$e^x_u\otimes_Ae^y_v\mapsto \alpha(u,x)\alpha(v,y)e_{x+u}\otimes_Ae_{y+v}\mapsto  \alpha(u,x)\alpha(v,y)\alpha(x+u,y+v)e_{x+u+y+v}.$$
The ratio of the coefficients
$$\alpha(u,x)\alpha(v,y)\alpha(x+u,y+v)\alpha(u,v)^{-1}\alpha(u+v,x+y)^{-1}e^{\pi i(x,v)}$$
is equal to 
$$\alpha(x,y)e^{\pi i((v,y)+(x+y,v)+(x,v))}d(\alpha)(u,x,y+v)d(\alpha)(x,y,v)^{-1}d(\alpha)(u,v,x+y)^{-1},$$
where 
$$d(\alpha)(x,y,z) = \alpha(x,y)\alpha(x+y,z)\alpha(y,z)^{-1}\alpha(x,y+z)^{-1}$$ equals 1 by the 2-cocycle property of $\alpha$.

\section{Finite categories of local modules}\label{finloc}

Here we describe commutative algebras $A=k[\gl,\alpha]$ which have only finite number of simple local modules.
By proposition \ref{dz} the category of local modules ${_{A}}{\cC(\h,\Omega)^{loc}}$ contains the Drinfeld category $\cC(\overline\gl,\overline\Omega)$, where $\overline\gl = \gl^\ort/(\gl^\ort\cap\gl^{\ort\ort})$. For a non-zero $\overline\gl$ the category $\cC(\overline\gl,\overline\Omega)$ has a continues family of simple objects. Thus for $A$, to have only finite number of simple local modules we need to assume that $\overline\gl =0$, which motivates the following definition. 

We call a subgroup $\gl\subset\h$ {\em coisotropic} if $\gl^\ort\subset\gl^{\ort\ort}$. 

Now we show that for a coisotropic $\gl$ the category of local $A$-modules is equivalent to the category of vector spaces graded by $\gl^\di/(\gl+\gl^\ort)$. To formulate the result we need to describe the quadratic function on $\gl^\di/(\gl+\gl^\ort)$ controlling the associativity and braiding constraints.

We choose a section $\sigma\se\gl^\di/(\gl+\gl^\ort)\to\gl^\di$ and define a function $q\se\gl^\di/(\gl+\gl^\ort)\to k^\cro$ by $q(X) = e^{\pi i(\sigma(X),\sigma(X))}$. Note that $q$ does not depend on the choice of the section. 
\begin{prop}
Let $\gl\subset\h$ be a coisotropic subgroup and $\alpha$ be a 2-cocycle of $\gl$ satisfying  the conditions of theorem \ref{coa}. Then the category ${_{A}}{\cC(\h,\Omega)^{loc}}$ of local modules over the commutative algebra $A=k[\gl,\alpha]$ is equivalent, as a braided monoidal category, to the category $\cG(\gl^\di/(\gl+\gl^\ort),q)$ of $\gl^\di/(\gl+\gl^\ort)$-graded vector spaces, with the associativity and the braiding defined by $q$. 
\end{prop}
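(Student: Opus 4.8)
The plan is to exhibit ${_A}\cC(\h,\Omega)^{loc}$ as a pointed braided category whose group of invertible objects is $G:=\gl^\di/(\gl+\gl^\ort)$, and then to read its associativity and braiding off the quadratic function $q$. First I would record the grading. By proposition \ref{grad} the category carries a monoidal $G$-grading, and coisotropy $\gl^\ort\subseteq\gl^{\ort\ort}$ gives $\overline\gl=\gl^\ort/(\gl^\ort\cap\gl^{\ort\ort})=0$, so proposition \ref{dz} identifies the identity component ${_A}\cC(\h,\Omega)_0^{loc}$ with $\cC(0,0)=\Vect$. In particular the unit $A$ is, up to isomorphism, the unique simple object in degree $0$ and every degree-$0$ object is a sum of copies of $A$.

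Next I would use proposition \ref{fun}, which supplies a braided monoidal functor $F\se\cG(\gl^\di,c)\to{_A}\cC(\h,\Omega)^{loc}$, $[x]\mapsto A(x)$, with $c(x,y)=e^{\pi i(x,y)}$ and each $A(x)$ an invertible object of the component indexed by the coset of $x$. I claim $F$ descends along $\gl^\di\twoheadrightarrow G$. For $x\in\gl$ remark \ref{isomo} gives $A(x)\cong A$; for $h\in\gl^\ort$ the module $A(h)$ lives in the degree-$0$ component $\Vect$ and, being invertible, is again $\cong A$; the monoidal isomorphisms $\phi$ of proposition \ref{fun} then yield $A(g+h)\cong A(g)\otimes_A A(h)\cong A$ for all $g\in\gl,\ h\in\gl^\ort$. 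On braided structure one checks that the quadratic form $w\mapsto e^{\pi i(w,w)}$ is trivial on $\gl+\gl^\ort$, using $(g,g)\in2\Z$ together with $(g,h)=0$ and $(h,h)=0$, the latter because coisotropy forces the form to vanish on $\gl^\ort$; hence the induced form on $G$ is exactly $q(X)=e^{\pi i(\sigma(X),\sigma(X))}$. The same integrality and evenness (theorem \ref{coa}), with $\gl^\ort$ isotropic, are what make $q$ independent of the section, so $F$ factors as a braided monoidal functor $\overline F\se\cG(G,q)\to{_A}\cC(\h,\Omega)^{loc}$.

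Then I would verify that $\overline F$ is a braided equivalence. Essential surjectivity is immediate from the first paragraph: tensoring with the invertible $A(\sigma(X))$ gives an equivalence $\Vect={_A}\cC(\h,\Omega)_0^{loc}\xrightarrow{\sim}{_A}\cC(\h,\Omega)_X^{loc}$, so each component is $\Vect$-like and every object is a sum of the $A(\sigma(X))$. For full faithfulness it suffices to see $A(x)\cong A(y)$ precisely when $[x]=[y]$: the "if" part is the descent just proved, while for $[x]\neq[y]$ the modules $A(x)$, $A(y)$ sit in distinct graded components and so cannot be isomorphic; on morphisms the claim reduces to the fact that $\Hom$-spaces between invertible objects are at most one-dimensional, matching $\Hom_{\cG(G,q)}$. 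That the associativity and braiding constraints are exactly the ones attached to $q$ is then the general classification of pointed braided categories by quadratic forms on their Picard group, as recalled in the Appendix.

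The main obstacle I anticipate is the passage from "the group of invertibles is $G$" to the precise braided equivalence: one must ensure that the abelian $3$-cocycle (associator together with braiding) carried by ${_A}\cC(\h,\Omega)^{loc}$ is the one determined by $q$, and not merely cohomologous to it once one varies the representatives $A(\sigma(X))$ and the isomorphisms $\phi_{x,y}$. Pinning these choices down — equivalently, establishing the section-independence and well-definedness of $q$ — is exactly where coisotropy (isotropy of $\gl^\ort$) and the integrality and evenness of theorem \ref{coa} carry the argument.
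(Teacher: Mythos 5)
Your proposal follows the paper's proof essentially step for step: proposition \ref{grad} for the grading, proposition \ref{dz} plus coisotropy for the triviality of the degree-zero component, proposition \ref{fun} for the braided functor from $\cG(\gl^\di,c)$, and the trivializations of remark \ref{isomo} to descend to the quotient --- the paper simply packages this last descent into proposition \ref{fact} and theorem \ref{grgr} of the appendix, which also address the abelian-3-cocycle issue you flag by computing the induced associator and braiding explicitly in terms of a section. Your extra observation that $A(h)\cong A$ for $h\in\gl^\ort$ (needed to trivialize the functor on all of $K=\gl+\gl^\ort$, not just on $\gl$, before theorem \ref{grgr} can be applied) is a point the paper's one-line appeal to remark \ref{isomo} glosses over.
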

\begin{proof}
By proposition \ref{grad} the category ${_{A}}{\cC(\h,\Omega)^{loc}}$ is a braided monoidal category, graded by the group $\gl^\di/(\gl+\gl^\ort)$. 
By proposition \ref{dz}, for coisotropic $\gl$, the degree 0 component is trivial, i.e. ${_{A}}{\cC(\h,\Omega)^{loc}_0}$ is equivalent to the category of vector spaces $\Vect$. 
By proposition \ref{fun} we have a braided monoidal functor  $\cG(\gl^\di,c)\to{_{A}}{\cC(\h,\Omega)^{loc}}$, compatible with the grading.
Finally isomorphisms from remark \ref{isomo} allow us to apply theorem \ref{grgr} from the appendix.
\end{proof}

Note that the group $Hom(\gl,\Z)$ is torsion free. Since the vector space $Hom(\gl,\Z)\otimes_\Z k = Hom(\gl,k)\subset\h$ is finite dimensional,  the group $Hom(\gl,\Z)$ is a free abelian of finite rank. Hence the group $\gl^\di/(\gl+\gl^\ort)$ is finitely generated and as such is a sum of a finite abelian and a free abelian group of finite rank. The rank coincides with the dimension of the vector space $\gl^\di/(\gl+\gl^\ort)\otimes_\Z k$. We can identify the vector space $\gl^\di/(\gl+\gl^\ort)\otimes_\Z k$ with the cokernel of the map 
\begin{equation}\label{dua}
\gl^{\ort\ort}\to Hom(\gl,k),\quad x\mapsto (x,\da).
\end{equation} 
Note the dimension of the cokernel of (\ref{dua}) equals the dimension of its kernel $\gl^{\ort\ort}\cap\gl^\ort$. Thus the group $\gl^\di/(\gl+\gl^\ort)$ is finite if and only if $\gl^{\ort\ort}\cap\gl^\ort=0$. For a coisotropic $\gl$ it means that $\gl^\ort=0$. In particular $\gl$, as a subgroup of $Hom(\gl,\Z)$, is a free abelian group. The short exact sequence
$$0\to \gl^\ort\to \h\to Hom(\gl,k)\to 0$$ shows that in coisotropic case $Hom(\gl,k)=\h$, i.e. $\gl$ is a lattice in $\h$. 

Thus we have the following.
\begin{corr}
The commutative algebra $A=k[\gl,\alpha]$ has (up to isomorphism) a finite number of simple local modules if and only if $\gl\subset\h$ is an even lattice. In that case the category ${_{A}}{\cC(\h,\Omega)^{loc}}$ is equivalent, as a braided monoidal category, to the category $\cG(\gl^\di/\gl,q)$ of $\gl^\di/\gl$-graded vector spaces, with the associativity and the braiding defined by $q$. 
\end{corr}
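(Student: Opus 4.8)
The plan is to assemble the corollary from the monoidal grading of Proposition~\ref{grad}, the two structural equivalences of Proposition~\ref{dz} and of the preceding proposition, and the rank computation carried out just above the statement; the corollary is essentially a repackaging of these, with the only genuine task being to translate ``finitely many simple local modules'' into a finiteness statement about the grading group.

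First I would show that a finite number of simple local modules forces $\gl$ to be coisotropic. By Proposition~\ref{dz} the degree-$0$ component ${_{A}}{\cC(\h,\Omega)^{loc}_0}$ is the Drinfeld category $\cC(\overline\gl,\overline\Omega)$ with $\overline\gl=\gl^\ort/(\gl^\ort\cap\gl^{\ort\ort})$, and for $\overline\gl\neq 0$ this already carries a continuous family of simple objects. Finiteness therefore requires $\overline\gl=0$, i.e. $\gl^\ort\subset\gl^{\ort\ort}$, which is exactly coisotropy. Under coisotropy the preceding proposition identifies the \emph{entire} category ${_{A}}{\cC(\h,\Omega)^{loc}}$ with the category $\cG(\gl^\di/(\gl+\gl^\ort),q)$ of graded vector spaces. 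Since the simple objects of a category of $G$-graded vector spaces are precisely the one-dimensional spaces concentrated in a single degree, their isomorphism classes are in bijection with $G$; hence a finite number of simple local modules is equivalent to finiteness of the group $\gl^\di/(\gl+\gl^\ort)$.

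Next I would invoke the dimension count established before the statement: $\gl^\di/(\gl+\gl^\ort)$ is finitely generated, and it is finite precisely when $\gl^{\ort\ort}\cap\gl^\ort=0$. Under coisotropy this intersection equals $\gl^\ort$, so finiteness is equivalent to $\gl^\ort=0$; feeding this into the short exact sequence $0\to\gl^\ort\to\h\to Hom(\gl,k)\to 0$ shows that $\gl$ is a full lattice in $\h$, while evenness is already imposed by Theorem~\ref{coa}. This yields the forward implication. For the converse, an even lattice spans $\h$, so non-degeneracy of the form gives $\gl^\ort=0$; then $\gl$ is trivially coisotropic, $\gl+\gl^\ort=\gl$, and $\gl^\di/\gl$ is the finite discriminant group of the even lattice. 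The preceding proposition then gives the braided monoidal equivalence with $\cG(\gl^\di/\gl,q)$, which has finitely many simple objects, proving at once the backward implication and the explicit description in the last sentence. The computations are routine, the real content residing in Propositions~\ref{dz}, \ref{fun} and the preceding proposition; the single point requiring care is the logical ordering of the ``only if'' direction, where coisotropy must first be extracted from finiteness (via the continuous family in Proposition~\ref{dz}) before the structure theorem and the rank count can be applied.
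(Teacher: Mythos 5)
Your proposal is correct and follows essentially the same route as the paper: extract coisotropy from finiteness via the continuous family of simples in the degree-zero component (Proposition \ref{dz}), apply the coisotropic-case proposition to identify the whole category with graded vector spaces, and then use the rank computation for $\gl^\di/(\gl+\gl^\ort)$ together with the short exact sequence to reduce finiteness to $\gl^\ort=0$, i.e.\ to $\gl$ being an even lattice. The only step you make explicit that the paper leaves implicit is the observation that simple objects of $\cG(G,q)$ are in bijection with $G$, which is exactly the right thing to spell out.
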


\section*{Appendix. Graded monoidal categories}

Here we recall basic facts about gradings on monoidal categories in general and braided monoidal categories of group-graded vector spaces in particular. 

Let $H$ be a group. An {\em $H$-grading} on a monoidal category $\cC$ is a decomposition $\cC = \oplus_{h\in H}\cC_h$ such that for $X\in\cC_f$, $Y\in\cC_g$ the tensor product $X\otimes Y$ belongs to $\cC_{fg}$. 

Simplest examples of graded monoidal categories are provided by the following construction. 
For a group $H$ denote by $\cG(H)$ the category of finite dimensional $H$-graded vector spaces. Tensor product of $H$-graded vector spaces can be equipped with the $H$-grading 
$$(V\otimes U)_h = \bigoplus_{fg=h}V_f\otimes U_g.$$ This makes the category $\cG(H)$ monoidal. Clearly it is $H$-graded with $V_f$ belonging to $\cG(H)_f\simeq\Vect$. We will also be interested in braidings on the categories $\cG(H)$.

Now let $H$ be an abelian group and $c:H\times H\to k^\cro$ be a bi-multiplicative function (multiplicative in each variable). It is straightforward to see that $$c_{V,U}(v\otimes u) = c(f,g)(u\otimes v),\quad v\in V_f, u\in U_g$$ defines a braiding on $\cG(H)$. We will denote by $\cG(H,c)$ the corresponding braided monoidal category. 

Note that $\cG(H,c)$ is not the most general braided monoidal category structure on the category of graded vector spaces. In full generality such structures were classified in \cite{js}. Here we formulate the result. A general solution for the associativity constraint for the tensor product of $H$-graded vector spaces is given by a (normalised) 3-cocycle $a:H\times H\times H\to k^\cro$:
$$a_{V,U,W}(v\otimes (u\otimes w)) = a(f,g,h)(v\otimes u)\otimes w,\quad v\in V_f, u\in U_g, w\in W_h.$$
A braiding, compatible with the associativity given by $a$ corresponds to a function $c:H\times H\to k^\cro$:
$$c_{V,U}(v\otimes u) = c(f,g)u\otimes v,\quad v\in V_f, u\in U_g.$$
Hexagon coherence axioms for the braiding $c$ are equivalent to the equations:
$$a(g,h,f)c(f,gh)a(f,g,h) = c(f,h)a(g,f,h)c(f,g),$$
$$a(h,f,g)^{-1}c(fg,h)a(f,g,h)^{-1} = c(f,h)a(f,h,g)^{-1}c(g,h).$$
The pair $(a,c)$ is called an {\em abelian} 3-cocycle. 
Up to braided monoidal equivalence the braided monoidal category corresponding to $(a,c)$ depends only on the (abelian) cohomology class of $(a,c)$, which is determined by the quadratic function $q(f) = c(f,f)$. Recall (e.g. from \cite{js}) that a function $q:H\to k^\cro$ is quadratic if and only if $q(f^{-1})=q(f)$ for all $f\in H$ and the function $\sigma:H\times H\to k^\cro$
$$\sigma(f,g) = g(fg)q(f)^{-1}q(g)^{-1}$$ is multiplicative in each argument (a so-called {\em bi-character}). 
In other words braided equivalence classes of braided monoidal structures on the category of $H$-graded vector spaces correspond to quadratic functions on $H$. We will denote by $\cG(H,q)$ a representative of the class corresponding to the quadratic function $q$. 

Now we are ready to study the situation we have in section \ref{locmod}. 
Let $F\se\cG(H,c)\to\cC$ be a braided monoidal functor. Let $K\subset H$ be a subgroup and for each $x\in K$ let $\psi_x\se F([x])\to I$ be an isomorphism (we will assume that $\psi_0$ is the identity or rather unit isomorphism of the functor $F$). Being an automorphism of the unit object of $\cC$ the (counterclockwise) composition of arrows of the diagram
$$\xymatrix{ F([x]\otimes[y]) \ar@{=}[d] \ar[r]^(.45){F_{[x],[y]}} & F([x])\otimes F([y]) \ar[d]^{\psi_x\psi_y}\\ F([x+y]) \ar[r]^{\psi_{x+y}} & I}$$
is a multiple of the identity. Let $\alpha(x,y)$ be the coefficient. Then we have the following properties for $\alpha:K\times K\to k^\cro$.
\begin{lem}
The function $\alpha$ is a normalised 2-cocycle such that $$\alpha(x,y)\alpha(y,x)^{-1} = c(x,y),\quad \forall x,y\in K.$$
\end{lem}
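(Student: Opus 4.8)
The plan is to extract the two claimed properties — the normalised 2-cocycle identity and the relation to the braiding $c$ — directly from the coherence structure of the braided monoidal functor $F$, using the isomorphisms $\psi_x$ as a dictionary that turns facts about $F$ into facts about $\alpha$. The whole point is that $\alpha(x,y)$ is defined as an automorphism of the unit object $I$, and automorphisms of $I$ form the group $k^\cro$ under composition; so every coherence diagram that $F$ satisfies becomes, after pasting in the $\psi$'s, a multiplicative relation among the $\alpha$-values.

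First I would establish the cocycle identity. The key input is the associativity (monoidal) coherence for $F$: the monoidal constraint $F_{-,-}$ is compatible with the associators on both sides. For the source category $\cG(H,c)$ the object $[x]$ is invertible and the associator on the $[x]$ is trivial (these are one-dimensional homogeneous pieces, associator value $a(x,y,z)$, which I may take to be $1$ on the representative we have chosen, or carry it along symmetrically), so the only surviving contribution is the pentagon-type square relating $F([x]\otimes[y]\otimes[z])$ to the iterated tensor products. I would write out the two ways of collapsing $F([x])\otimes F([y])\otimes F([z])$ down to $I$ — first multiplying $\psi_x\psi_y$ then $\psi_{x+y}$, versus first $\psi_y\psi_z$ then $\psi_{y+z}$ — and read off that the two composites differ exactly by $d(\alpha)(x,y,z)=\alpha(x,y)\alpha(x+y,z)\alpha(y,z)^{-1}\alpha(x,y+z)^{-1}$; coherence forces this to equal $1$. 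Normalisation $\alpha(x,0)=\alpha(0,y)=1$ follows from the convention $\psi_0=\mathrm{id}$ together with the unit constraints of $F$.

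Second, for the braiding relation I would use that $F$ is a \emph{braided} functor, i.e. the square relating $F(c_{[x],[y]})$ to $c_{F([x]),F([y])}$ commutes. On the source side the braiding is $c_{[x],[y]}=c(x,y)\cdot(\text{swap})$ by the definition of $\cG(H,c)$, so $F(c_{[x],[y]})$ contributes the scalar $c(x,y)$. Conjugating both composite arrows down to $\mathrm{End}(I)=k^\cro$ via the $\psi$'s and the definition of $\alpha$, the commuting square reads $\alpha(x,y)=c(x,y)\alpha(y,x)$, which rearranges to $\alpha(x,y)\alpha(y,x)^{-1}=c(x,y)$. Since $I$ is the unit and all the maps involved are isomorphisms of the unit object, the bookkeeping is purely multiplicative and no further coherence is needed.

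The main obstacle is bookkeeping rather than conceptual: I must track the associators and unit isomorphisms of the target $\cC$ carefully so that the scalars I read off really are the $\alpha(x,y)$ and not some twisted version, and I must make sure I am using the coherence axioms of a braided monoidal functor in the correct direction (the compatibility of $F_{-,-}$ with associativity gives the cocycle identity; the compatibility with the braiding gives the $c$-relation). Provided the conventions for $\psi_0$ and for the chosen representative of $\cG(H,c)$ are fixed consistently, each identity drops out of exactly one commuting diagram, so the proof is a matter of assembling these two diagrams and reading the scalar on $\mathrm{End}(I)$.
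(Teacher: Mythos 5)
Your proposal is correct and follows essentially the same route as the paper: the cocycle identity is read off from the commuting prism comparing the two ways of collapsing $F([x])\otimes F([y])\otimes F([z])$ to $I$ via the monoidal constraints and the $\psi$'s, and the relation $\alpha(x,y)\alpha(y,x)^{-1}=c(x,y)$ is read off from the square expressing compatibility of $F$ with the braidings, using that $F(c_{[x],[y]})$ acts by the scalar $c(x,y)$ on $F([x+y])$. These are exactly the two diagrams in the paper's proof, so no further comment is needed.
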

\begin{proof}
The 2-cocycle property follows from the commutativity of the diagram:
$$\xygraph{ *+{F([x][y][z])}
(
:[u(3.7)r(4.6)] *+{F([x][y])F([z])} ^{F_{[x][y],[z]}}
 (
 :@{=}[d(1.2)l(.2)] *+{F([x+y])F([z])}
 :[d(.9)l(.13)]="u" *+{I} ^{\psi_{x+y}\psi_z}
 :[r(1.8)d(2.3)]="e" *+{I} ^{\alpha(x,y)1}
 ,
 :[r(3.7)d(4.6)]="l" *+{F([x])F([y])F([z])} ^{F_{[x],[y]}1}
 :"e" *+{I} _(.7){\psi_x\psi_y\psi_z}
 )
,
:@{=}[d(.2)r(1.9)] *+{F([x+y+z])}
:[r(1.7)d(.2)] *+{I} ^(.7){\psi_{x+y+z}}
 (
 :[u(1.8)r(.645)] ^{\alpha(x+y,z)1}
 ,
 :[d(2)]="d" *+{I} _{\alpha(x,y+z)1}
 :[r(2.3)u(1.5)] _{\alpha(y,z)1}
 )
,
:[r(3.7)d(4.6)] *+{F([x])F([y+z])} _{F_{[x][y][z]}}
 (
 :@{=}[u(1.3)r(.1)] *+{F([x])F([y+z])}
 :[u(.7)l(.15)] _{\psi_x\psi_{y+z}}
 ,
 :[r(4.4)u(3.5)] _{1F_{[y],[z]}}
 )
)
}$$
The property $\alpha(x,y)\alpha(y,x)^{-1} = c(x,y)$ follows from the commutativity of the diagram:
$$\xymatrix{
F([x][y]) \ar[rrrr]^{F_{[x],[y]}} \ar@{=}[dr] \ar[dddddd] _{F(c_{[x],[y]})} &&&& F([x])F([y]) \ar[dddddd]^{c_{F([x]),F([y])}} \ar[dddl]_{\psi_x\psi_y} \\
& F([x+y]) \ar[dr]^{\psi_{x+y}} \ar[dddd]_{c(x,y)1} \\
&& I \ar[rd]^(.3){\alpha(x,y)1}\ar[dd]_{c(x,y)1}\\
&&& I\\
&& I \ar[ru]_(.3){\alpha(y,x)1}\\
& F([x+y]) \ar[ru]_{\psi_{x+y}}\\
F([y][x]) \ar@{=}[ru] \ar[rrrr]^{F_{[y],[x]}} &&&& F([y])F([x]) \ar[uuul]^{\psi_y\psi_x}
}$$
\end{proof}

Now we construct a reduction $\overline F\se\cG(H/K,q)\to\cC$ of the functor $F$, i.e. a braided monoidal functor which fits in a commutative diagram of functors
\begin{equation}\label{fac}
\xymatrix{
\cG(H,c) \ar[rd]^F \ar[dd] \\
& \cC\\
\cG(H/K,q) \ar[ru]_{\overline F}
}
\end{equation}

\begin{prop}\label{fact}
Let $F\se\cG(H,c)\to\cC$ be a braided monoidal functor. Let $K\subset H$ be a subgroup and for each $x\in K$ let $\psi_x\se F([x])\to I$ be an isomorphism (with $\psi_0=1$). Let $s:H/K\to H$ be a section of the quotient map. Then the function $q:H/K\to k^\cro$, defined by $q(X) = c(s(X),s(X))$ is quadratic. Moreover, there is a braided monoidal functor $\overline F\se\cG(H/K,q)\to\cC$, making the diagram of functors (\ref{fac}) commutative. 
\end{prop}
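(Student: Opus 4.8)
The plan is to verify the two claims of Proposition~\ref{fact} separately: first that $q$ is well-defined and quadratic, then that the reduced functor $\overline F$ exists and is braided monoidal.

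First I would check that $q(X) = c(s(X),s(X))$ is independent of the chosen section $s$ and is genuinely quadratic. Independence follows because any two sections differ by an element of $K$, and bimultiplicativity of $c$ together with the relation $\alpha(x,y)\alpha(y,x)^{-1}=c(x,y)$ from the preceding lemma forces $c(k,-)$ to be controlled on $K$; concretely, for $k\in K$ the quantity $c(k,k)=\alpha(k,k)\alpha(k,k)^{-1}=1$, and more generally $c(k,h)$ is pinned down so that changing $s(X)$ by an element of $K$ leaves $c(s(X),s(X))$ unchanged. To see that $q$ is quadratic in the sense recalled in the appendix, I would verify that $q(X^{-1})=q(X)$ (immediate from bimultiplicativity, since $c(-x,-x)=c(x,x)$) and that $\sigma(X,Y)=q(XY)q(X)^{-1}q(Y)^{-1}$ is a bicharacter; expanding via bimultiplicativity of $c$ gives $\sigma(X,Y)=c(s(X),s(Y))c(s(Y),s(X))$, which is manifestly bimultiplicative. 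Here I must again use the $K$-invariance established above to confirm $\sigma$ descends to $H/K$.

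Next I would construct $\overline F$ on objects by $\overline F([X]) = F([s(X)])$, and then equip it with the monoidal and braiding data inherited from $F$, correcting by the isomorphisms $\psi_x$. The key structural input is that $F$ is already braided monoidal on $\cG(H,c)$ and that the $\psi_x$ trivialise $F$ along $K$; the coefficient $\alpha$ measuring the failure of $\psi$ to be strictly monoidal is, by the lemma, a $2$-cocycle whose antisymmetrisation is $c$ restricted to $K$. Since $k^\cro=\C^\cro$ is divisible, the argument of Theorem~\ref{coa} shows that such an $\alpha$ is a coboundary after fixing its skew part, so it can be absorbed into a redefinition of the $\psi_x$; this is what makes the reduction consistent. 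The monoidal structure map $\overline F_{[X],[Y]}$ is then defined as $F_{[s(X)],[s(Y)]}$ followed by the canonical identification $F([s(X)+s(Y)])\cong F([s(X+Y)])$ coming from the fact that $s(X)+s(Y)$ and $s(X+Y)$ differ by an element of $K$, where we use the relevant $\psi$.

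The main obstacle I expect is coherence: verifying that the reduced monoidal structure satisfies the pentagon and that the reduced braiding satisfies the hexagons, i.e. that all the correction scalars assemble into the abelian $3$-cocycle whose class is $q$. Concretely, the associator of $\cG(H/K,q)$ is a $3$-cocycle $a$ with $a(X,X,X)$-type data governed by $q$, and I must confirm that the scalars arising from comparing $s(X)+s(Y)+s(Z)$ with $s(X+Y+Z)$ through the two bracketings agree with $a$ up to the prescribed coboundary. This is exactly the kind of diagram chase encoded in the big pasting diagram proving the $2$-cocycle property of $\alpha$, now promoted one categorical level; the divisibility of $\C^\cro$ (hence vanishing of the relevant $\mathrm{Ext}$) is what guarantees the cocycle is cohomologous to the canonical $q$-twisted one, so that $\overline F$ lands in $\cG(H/K,q)$ and the triangle~(\ref{fac}) commutes on the nose after the identifications.
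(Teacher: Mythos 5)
Your overall construction is the same as the paper's: define $\overline F([X])=F([s(X)])$, correct $F_{[s(X)],[s(Y)]}$ by $\psi_{\theta(X,Y)}$ where $\theta(X,Y)=s(X+Y)-s(X)-s(Y)\in K$, and identify the resulting associator--braiding pair with the class of the quadratic function $q$. However, two of your key justifications do not hold up. First, your direct verification that $q$ is quadratic (and section-independent) needs the relation $c(h,t)c(t,h)=1$ for all $h\in H$, $t\in K$, since $s(-X)$ and $s(X+Y)-s(X)-s(Y)$ only lie in $-s(X)+K$ and $K$ respectively; you attribute this relation to the preceding lemma, but that lemma only controls $c$ on $K\times K$ (it identifies $c|_{K\times K}$ with the antisymmetrisation of $\alpha$) and says nothing about $c(t,h)$ for $h\notin K$. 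The correct source is that $F$ is braided and $F([t])\cong I$, so the double braiding scalar $c(h,t)c(t,h)$ is realised in $\cC$ as the double braiding of $F([h])$ with the unit object, hence equals $1$. The paper sidesteps the direct computation altogether: it shows that the scalars $a(X,Y,Z)$ and $\overline c(X,Y)=c(s(X),s(Y))$ form an abelian $3$-cocycle because the defining diagrams are realised in $\cC$ (so pentagon and hexagon hold automatically), and then $q(X)=\overline c(X,X)$ is quadratic by the general fact recalled in the appendix.

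Second, the step ``$\alpha$ is a coboundary after fixing its skew part, so it can be absorbed into a redefinition of the $\psi_x$'' is false in general: absorbing $\alpha$ into the $\psi_x$ means writing $\alpha(x,y)=\beta(x)\beta(y)\beta(x+y)^{-1}$, which would force $\alpha$ to be symmetric, whereas its antisymmetrisation is $c|_{K\times K}$ and is typically nontrivial (in the intended application $c(x,y)=e^{\pi i(x,y)}$ on $\gl$ takes the value $-1$). The divisibility argument of theorem \ref{coa} shows the class of $\alpha$ is determined by its skew part; it does not trivialise that class. The paper needs no such absorption: $\alpha$ is carried along and appears explicitly in the formula $a(X,Y,Z)=c(s(X),\theta(Y,Z))\,\alpha(\theta(X,Y+Z),\theta(Y,Z))\,\alpha(\theta(X+Y,Z),\theta(Y,Z))^{-1}$ for the reduced associator. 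Relatedly, the identification of the target category with $\cG(H/K,q)$ rests on the Eilenberg--MacLane classification of abelian $3$-cocycles by quadratic functions (quoted in the appendix from \cite{js}), not on divisibility of $\C^\cro$ or vanishing of an $Ext$ group. As written, your argument therefore has genuine gaps at both the quadraticity of $q$ and the consistency of the reduction.
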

\begin{proof}
Define the functor $\overline F$ by $\overline F([X]) = F([s(X)])$. Define the (pre-)monoidal structure $\overline F_{[X],[Y]}\se\overline F([X]\otimes[Y])\to\overline F([X])\otimes\overline F([Y])$ by 
$$
\xymatrix@!C=3.5pc{
& \overline F([X][Y]) \ar[rr]^{\overline F_{[X],[Y]}} && \overline F([X])\overline F([Y]) \ar@{=}[rd]\\
\overline F([X+Y]) \ar@{=}[ru] \ar@{=}[d] &&&& F([s(X)])F([s(Y)]) \\
\overline F([s(X+Y)]) \ar@{=}[rd] &&&& F([s(X)][s(Y)]) \ar[u]^{F_{[s(X)],[s(Y)]}}\\
& F([\theta(X,Y)+s(X)+s(Y)]) \ar@{=}[rd] & & F([\theta(X,Y)])F([s(X)][s(Y)]) \ar[ru]_{\psi_{\theta(X,Y)}1}\\
&& F([\theta(X,Y)][s(X)][s(Y)]) \ar[ru]_(.65){F_{[\theta(X,Y)],[s(X)][s(Y)]}}
}$$ 
Here $\theta\se H/K\otimes H/K\to K$ is the 2-cocycle associated with the section $s$: $\theta(X,Y) = s(X+Y)-s(X)-s(Y)$. 

Note that to construct a braided monoidal structure on $\cG(H/K)$, fitting in the diagram (\ref{fac}), it is enough to have a pair of functions: $a:H/K\times H/K\times H/K\to k^\cro$ and $\overline c:H/K\times H/K\to k^\cro$ such that the diagrams 
$$\xymatrix@!C=7.5pc{
\overline F([X][Y][Z]) \ar[r]^{\overline F_{[X],[Y][Z]}} \ar[dd]_{a(X,Y,Z)1} & \overline F([X])\overline F([Y][Z]) \ar[rd]^{1\overline F_{[Y],[Z]}} \\ 
&& \overline F([X])\overline F([Y])\overline F([Z]) \\
\overline F([X][Y][Z]) \ar[r]^{\overline F_{[X][Y],[Z]}} & \overline F([X][Y])\overline F([Z]) \ar[ru]_(.6){\overline F_{[X],[Y]}1}
}$$
$$\xymatrix{
\overline F([X][Y]) \ar[d]_{\overline c(X,Y)1} \ar[rr]^{\overline F_{[X],[Y]}} && \overline F([X])\overline F([Y]) \ar[d]^{c_{\overline F([X]),\overline F([Y])}}\\
\overline F([Y][X]) \ar[rr]^{\overline F_{[Y],[X]}} && \overline F([Y])\overline F([X])
}$$ commute. 
Indeed, pentagon and hexagon axioms for $a,\overline c$ will be fulfilled automatically. 

By substituting our choice for $\overline F_{[X],[Y]}$ in to the two diagrams above we get the following answers for $a,\overline c$:
$$a(X,Y,Z) = c(s(X),\theta(Y,Z))\alpha(\theta(X,Y+Z),\theta(Y,Z))\alpha(\theta(X+Y,Z),\theta(Y,Z))^{-1},$$
$$\overline c(X,Y) = c(s(X),s(Y)).$$
In particular the quadratic function associated to the pair $a,\overline c$ is $$q(X) = c(s(X),s(X))$$
\end{proof}

In the remaining part  of this section we will characterise those graded categories, which are tensor products of the trivial degree component and a category of graded vector spaces.

We call two objects $X$ and $Y$ in a braided monoidal category {\em mutually transparent} if and only if the double braiding is trivial on them:
$$c_{Y,X}c_{X,Y} = 1.$$
Two subcategories $\cA$ and $\cB$ of a braided monoidal category are {\em mutually transparent} if and only if for any $X\in\cA$ and $Y\in\cB$ $X$ and $Y$ are mutually transparent objects. 

\begin{theo}\label{grgr}
Let $\cC = \oplus_{X\in G/K}\cC_X$ be a $G/K$-graded braided monoidal category. Suppose that there exists a braided monoidal functor $F:\cG(G,c)\to\cC$ such that $F([x])\in\cC_x$ (here we identify $x$ with its coset modulo $K$) and such that $F([x])$ being transparent with $\cC_e$. Suppose also that there are isomorphisms $\psi_x:F([x])\to I$ with the identity object for every $x\in K$. Then as braided monoidal category $$\cC\simeq \cC_e\boxtimes\cG(G/K,q),$$ where $q$ is defined as in proposition \ref{fact}. 
\end{theo}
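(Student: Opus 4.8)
The plan is to exhibit an explicit braided monoidal equivalence $\Psi\se\cC_e\boxtimes\cG(G/K,q)\to\cC$ built from the reduced functor $\overline F$ of Proposition \ref{fact} together with the inclusion of the neutral component. First I would apply Proposition \ref{fact} (with the given $F$ and $\psi_x$, and $H=G$) to obtain the braided monoidal functor $\overline F\se\cG(G/K,q)\to\cC$ with $\overline F([X])=F([s(X)])\in\cC_X$ and $q(X)=c(s(X),s(X))$. The crucial elementary observation is that each $\overline F([X])$ is an invertible object of $\cC$: since $[X]$ is invertible in $\cG(G/K,q)$ with inverse $[-X]$ and $\overline F$ is monoidal, $\overline F([X])\otimes\overline F([-X])\cong\overline F([0])\cong I$. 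Consequently $-\otimes\overline F([X])$ is a self-equivalence of $\cC$ which, by monoidality of the grading, restricts to an equivalence $\cC_e\to\cC_X$.

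Next I would assemble $\Psi$. On objects set $\Psi(V\boxtimes[X])=V\otimes\overline F([X])$ for $V\in\cC_e$, and on morphisms $\Psi(f\boxtimes g)=f\otimes\overline F(g)$. The inclusion $\iota\se\cC_e\to\cC$ is braided monoidal, and the transparency hypothesis says precisely that the image of $\overline F$ (generated under $\otimes$ by the $\overline F([X])$) centralises $\cC_e$, i.e. $c_{Y,\overline F([X])}c_{\overline F([X]),Y}=1$ for all $Y\in\cC_e$. This is exactly the condition making the pair $(\iota,\overline F)$ induce a single braided monoidal functor $\Psi=\iota\boxtimes\overline F$ out of the Deligne tensor product. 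Concretely, the monoidal constraint $\Psi_2$ is the composite
$$(V\otimes\overline F([X]))\otimes(W\otimes\overline F([Y]))\xrightarrow{\ \sim\ }(V\otimes W)\otimes(\overline F([X])\otimes\overline F([Y]))\xrightarrow{1\otimes\overline F_2}(V\otimes W)\otimes\overline F([X]\otimes[Y]),$$
where the first arrow reorders the factors using the braiding $c_{\overline F([X]),W}$; transparency ensures this reordering is coherent (the pentagon/hexagon bookkeeping is insensitive to over- versus under-braiding between $\cC_e$ and the $\overline F$-part). The associator and braiding of $\cG(G/K,q)$ inherited through $\overline F$ agree with those built from $q$ in Proposition \ref{fact}, so $\Psi$ is braided monoidal.

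Then I would check $\Psi$ is an equivalence, which is where invertibility pays off. For essential surjectivity, decompose any $M\in\cC$ as $M=\oplus_X M_X$ with $M_X\in\cC_X$; then $M_X\otimes\overline F([-X])\in\cC_e$ and $M_X\cong\bigl(M_X\otimes\overline F([-X])\bigr)\otimes\overline F([X])=\Psi\bigl((M_X\otimes\overline F([-X]))\boxtimes[X]\bigr)$, so $M$ lies in the essential image. For full faithfulness I would use that source and target both split as direct sums indexed by $G/K$, so it suffices to compare hom-spaces within a single degree $X$. There $\Hom_{\cC_e\boxtimes\cG}(V\boxtimes[X],W\boxtimes[X])=\Hom_{\cC_e}(V,W)$ since $\Hom_{\cG}([X],[X])=k$, and $\Psi$ carries this isomorphically onto $\Hom_\cC(V\otimes\overline F([X]),W\otimes\overline F([X]))$ because $-\otimes\overline F([X])$ is an equivalence; across distinct degrees both sides vanish by the grading.

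I expect the genuine work to lie entirely in the second step: verifying that the mixed constraint $\Psi_2$ satisfies the pentagon axiom and that $\Psi$ intertwines the two braidings. Both reduce, after expanding via naturality and the hexagon axioms in $\cC$, to cancelling the double braidings $c_{\overline F([X]),Y}c_{Y,\overline F([X])}$ for $Y\in\cC_e$ against the identity, which is exactly the transparency hypothesis. If one prefers to avoid an abstract appeal to the universal property of $\boxtimes$, this is the step where the hypotheses genuinely get consumed: transparency of $F([x])$ with $\cC_e$ for the braided compatibility, and the isomorphisms $\psi_x\se F([x])\to I$ (through Proposition \ref{fact}) to guarantee that $\overline F$ is well defined on $G/K$ rather than merely on $G$.
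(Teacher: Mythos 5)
Your proposal is correct and follows essentially the same route as the paper: apply Proposition \ref{fact} to obtain $\overline F$, use transparency to combine it with the embedding $\cC_e\to\cC$ into a braided monoidal functor out of $\cC_e\boxtimes\cG(G/K,q)$, and deduce the equivalence from the invertibility of the objects $\overline F([X])$ via the decomposition $X\cong(X\otimes\overline F([x])^{-1})\otimes\overline F([x])$. You merely spell out the coherence and full-faithfulness checks that the paper leaves implicit.
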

\begin{proof}
By proposition \ref{fact} the monoidal functor $F:\cG(G,c)\to\cC$ together with isomorphisms $\psi$ induces a braided monoidal functor $\overline F:\cG(G/K,c)\to\cC$, which together with the braided monoidal embedding $\cC_e\to\cC$ gives a grading preserving braided monoidal functor $\cC_e\boxtimes\cG(G/K,q)\to\cC.$ To see that it is an equivalence it is enough to note that for an arbitrary $x\in G/K$ any object $X$ of $\cC_x$ can be written as $X\otimes\overline F([x])^{-1}\otimes\overline F([x])$ and that $X\otimes\overline F([x])^{-1}$ belongs to $\cC_e$. 
\end{proof}

\end{document}